\def\mE{\mathbb{E}}
\newcommand{\bb}{\mathbb}
\newcommand{\Cal}{\mathcal}
\newtheorem{theorem}{Theorem}
\newtheorem{proposition}[theorem]{Proposition}
\newtheorem{corollary}[theorem]{Corollary}
\newtheorem{remark}{Remark}
\newcommand{\indep}{\;\, \rule[0em]{.03em}{.67em} \hspace{-.25em}
\rule[0em]{.65em}{.03em} \hspace{-.25em}\rule[0em]{.03em}{.67em}\;\,}
\newcommand{\X}{\mathscr{X}}
\newcommand{\dX}{\ddot{X}}
\newcommand{\Y}{\mathscr{Y}}
\newcommand{\V}{\mathcal{V}}
\renewcommand{\H}{\mathscr{H}}
\def\0{{\bf 0}}
\def\1{{\bf 1}}
\def\D{{\mathcal{D}}}
\def\M{\mbox{$\mathcal{M}$}}
\def\Z{{\mathscr{Z}}}
\def\bq{\begin{equation}}
\def\eq{\end{equation}}
\def\squarebox#1{\hbox to #1{\hfill\vbox to #1{\vfill}}}
\def\bse{\begin{eqnarray*}}
\def\ese{\end{eqnarray*}}
\def\be{\begin{eqnarray}}
\def\ee{\end{eqnarray}}
\def\bsq{\begin{equation*}}
\def\esq{\end{equation*}}
\def\bq{\begin{equation}}
\def\eq{\end{equation}}
\def\boxit#1{\vbox{\hrule\hbox{\vrule\kern6pt\vbox{\kern6pt#1\kern6pt}\kern6pt\vrule}\hrule}}
\providecommand{\keywords}[1]
{
  {\small	
  \textbf{\textit{Keywords---}} #1
}}
\title{On Distance and Kernel Measures of Conditional Independence}
\author{Tianhong Sheng\thanks{txs514@psu.edu}  }
\author{Bharath K. Sriperumbudur\thanks{bks18@psu.edu}}
\affil{Department of Statistics, Pennsylvania State University\\
University Park, PA 16802, USA.}
\date{}
\begin{document}

\maketitle

\begin{abstract}
Measuring conditional independence is one of the important tasks in statistical inference and is fundamental in causal discovery, feature selection, dimensionality reduction, Bayesian network learning, and others. In this work, we explore the connection between conditional independence measures induced by distances on a metric space and reproducing kernels associated with a reproducing kernel Hilbert space (RKHS). For certain \emph{distance and kernel pairs}, we show the distance-based conditional independence measures to be equivalent to that of kernel-based measures. On the other hand, we also show that some popular---in machine learning---kernel conditional independence measures based on the Hilbert-Schmidt norm of a certain cross-conditional covariance operator, do not have a simple distance representation, except in some limiting cases. This paper, therefore, shows the distance and kernel measures of conditional independence to be not \emph{quite equivalent} unlike in the case of joint independence as shown by \citet{sejdinovic2013equivalence}.
\end{abstract}
\keywords{energy distance, distance covariance, reproducing kernel Hilbert space, Hilbert-Schmidt independence criterion, conditional independence test}

\section{Introduction}
Measuring conditional independence between random variables plays a fundamental role in many statistical inference tasks such as causal discovery \citep{pearl2000causality, spirtes2000causation}, 
supervised dimensionality reduction \citep{cook2002dimension, fukumizu2004dimensionality}, conditional independence testing \citep{su2007consistent, gretton2012kernel}, and others. 
Formally, for random variables $X,Y$, and $Z$, $X$ is said to be \emph{conditionally independent} of $Y$ given $Z$, denoted as $X\indep Y|Z$, if $\mathbb{P}_{XY|Z}=\mathbb{P}_{X|Z}\mathbb{P}_{Y|Z}$. Given a distance measure $D$ on the space of probability measures, $D(\mathbb{P}_{XY|Z},\mathbb{P}_{X|Z}\mathbb{P}_{Y|Z})$ measures the degree of conditional (in)dependence between $X$ and $Y$ given $Z$, with $X\indep Y|Z$ if and only if $D(\mathbb{P}_{XY|Z},\mathbb{P}_{X|Z}\mathbb{P}_{Y|Z})=0$. Some popular choices for $D$ include the Kullback-Leibler divergence (more generally $f$-divergence), total variation distance, Hellinger distance, Wasserstein distance, among others. 

Recently, a class of distances on probability measures induced by a Euclidean metric on $\mathbb{R}^d$---more generally by metrics of strongly negative type---, called the energy distance \citep{Szekely2004} and distance covariance \citep{Szekely-07,SzeRiz09,lyons2013distance} has gained popularity in nonparametric hypothesis testing (e.g., two-sample and independence testing), because of their computational simplicity and elegant interpretation. \citet{wang2015conditional} extended distance covariance to conditional distributions on $\bb{R}^d$ to obtain a measure of conditional independence, called \emph{conditional distance covariance} (CdCov) and has been applied in conditional independence testing. We refer to these class of probability metrics as \emph{distance-based measures} and point the reader to Section~\ref{Sec:dcov} for preliminaries on distance-based measures.

On the other hand, in the machine learning literature, measures of independence have been formulated based on embedding of probability distributions into a reproducing kernel Hilbert space (RKHS; \citealp{aronszajn1950theory}). This embedding into RKHS allows to capture the properties of distributions and has been used in many applications including homogeneity, independence, and conditional independence testing (for example, see \citealp{MAL-060} and references therein). Formally, given a probability measure $\nu$ defined on a measurable space $\X$, and a RKHS $\H_{k}$ with the reproducing kernel $k$, $\nu$ can be embedded into $\H_{k}$ as\be
 \nu \mapsto \int_\X k(\cdot,x)\, d\nu(x) := \mu_k(\nu) \in \H_{k},\nonumber
\ee
where $\mu_k(\nu)$ is called the mean element or kernel mean embedding of $\nu$. Using this notion, the \emph{kernel distance}~\citep{gretton2006twosample} between two probability distributions $\bb{P}$ and $\bb{Q}$ is defined as the distance between their mean elements, i.e., $D(\bb{P},\bb{Q})=\Vert \mu_k(\bb{P})-\mu_k(\bb{Q})\Vert_{\H_k}$. The kernel embedding and the kernel distance are well-studied in the literature and their mathematical theory is well-developed \citep{sriperumbudur2010hilbert,sriperumbudur-11-jmlr,sriperumbudur-bernoulli}. Generalizing this notion of kernel embedding to distributions defined on product spaces yields a kernel measure of independence, called the Hilbert-Schmidt independence criterion (HSIC; \citealp{gretton2005measuring}, \citealp{gretton2008kernel}, \citealp{smola2007hilbert}), which can then be used as a measure of conditional independence by employing it to conditional probability distributions \citep{fukumizu2004dimensionality,fukumizu2008kernel}. \citet{fukumizu2004dimensionality,gretton2005measuring} provided an alternate interpretation for HSIC in terms of the Hilbert-Schmidt norm of a certain cross-covariance operator, based on which the Hilbert-Schmidt norm of a conditional cross-covariance operator (we refer to it as HS\"{C}IC) is then proposed as a measure of conditional independence. We point the reader to Sections~\ref{Sec:kernel} and \ref{subsec:relation} for details and refer to these class of probability metrics as \emph{kernel-based measures}.

\cite{sejdinovic2013equivalence} established an equivalence between distance-based and kernel-based independence measures (i.e., distance covariance and HSIC) by showing that a reproducing kernel that defines HSIC induces a semimetric of negative type which in turn defines the distance covariance, and vice-versa. However, despite the striking similarity, the relationship between conditional distance covariance and related kernel measures is not known. The goal of this work is to investigate the relationship between distance and kernel-based measures of conditional independence, and in particular, understand whether these measures are equivalent (i.e., the distance measure can be obtained from the kernel measure and vice-versa).

As our contributions, first, in Theorem~\ref{thm5_1} (Section~\ref{subsec:hscic}), we generalize the conditional distance covariance of \citet{wang2015conditional} to arbitrary metric spaces of negative type---we call this as generalized CdCov (gCdCov)---and develop a kernel measure of conditional independence (we refer to it as HSCIC) that is \emph{equivalent} to gCdCov. Therefore, it follows from Theorem~\ref{thm5_1} that CdCov introduced by \citet{wang2015conditional} is a special case of the HSCIC. Second, in Theorem~\ref{pro:hsc} (Section~\ref{subsec:relation}), we consider the kernel measure of conditional independence based on the Hilbert-Schmidt norm of the conditional cross-covariance operator (i.e., HS\"{C}IC) and obtain its distance-based interpretation. We show that this distance-based version of HS\"{C}IC does not have an elegant interpretation, except in limiting cases where it is related to CdCov and gCdCov 
(see 
Corollaries~\ref{pro:tv} and \ref{cor:2}). Third, in Section~\ref{sec:estimation}, we investigate the similarities and differences between distance-based and kernel-based measures from the point of view of empirical estimation---the empirical estimates of these measures are used in the literature as test statistics in conditional independence testing. We show that unlike distance-based measures that require conditional density estimation, HS\"{C}IC only requires the estimation of linear functionals of joint distributions and therefore does not require estimation of conditional densities, while both have similar computational complexity requirements. Overall, these results establish that unlike in the case of joint independence, the distance and kernel-based measures of conditional independence are not quite equivalent.

The paper is organized as follows. Definitions and notation that are widely used throughout the paper are collected in Section~\ref{Sec:notation}. The preliminaries on distance-based and kernel-based measures are presented in Sections~\ref{Sec:dcov} and \ref{subsec:prelim}, respectively, while main results are presented in Sections~\ref{subsec:hscic}, \ref{subsec:relation}, and \ref{sec:estimation}.


\section{Definition and Notation}\label{Sec:notation}
For a non-empty set $\X$, a function $\rho: \X \times \X \to [0, \infty)$ is called a semimetric on $\X$ if it satisfies  (i) $\rho(x,x')= 0 \Leftrightarrow x=x'$ and (ii) $\rho(x,x')=\rho(x',x)$. Then $(\X, \rho)$ is said to be a semimetric space. The semimetric space, $(\X, \rho)$ is said to be of negative type if $\forall n \geq 2$, $\{x_i\}^n_{i=1} \in \X$, and $\{\alpha_i\}^n_{i=1}\in \bb{R}$, with $\sum_{i=1}^n \alpha_i = 0$,  $\sum_{i=1}^n\sum_{j=1}^n \alpha_i \alpha_j \rho(z_i,z_j) \leq 0$. $(\X,\rho)$ is said to be of strongly negative type if for all finite signed measures $\mu$ such that $\mu(\X)=0$, $\int\int \rho(x,y)\,d\mu(x)\,d\mu(y)<0$ for all $\mu\ne 0$. 

For a non-empty set $\X$, a real-valued symmetric function $k:\X \times \X \to \bb{R}$ is called a positive definite (pd) kernel if, for all $n \in \bb{N}$, $\{\alpha_i\}^n_{i=1} \in \bb{R}$ and $\{x_i\}^n_{i=1} \in \X$, we have $\sum_{i,j=1}^n \alpha_i \alpha_j k(x_i,x_j) \geq 0$. A function $k:\X \times \X \to \bb{R}$, $(x,y) \mapsto k(x,y)$ is a \textit{reproducing kernel} of the Hilbert space $(\H_k, \langle\cdot,\cdot \rangle_{\H_k})$ of functions if and only if (i) $\forall x \in \X$, $k(\cdot, x) \in \H$ and (ii) $\forall x \in \X$, $\forall f \in \H_k$, $\langle k(\cdot, x), f\rangle_{\H_k} = f(x)$ hold. If such a $k$ exists, then $\H_k$ is called a \textit{reproducing kernel Hilbert space}. It is easy to show that every reproducing kernel (r.k.) is symmetric and positive definite, since $\langle k(\cdot, x), k(\cdot, y)\rangle_{\H_k} = k(x,y), \forall x,y \in \X$.

In this paper, $\X$, $\Y$ and $\Z$ denote measurable spaces endowed with Borel $\sigma$-algebras. $X$, $Y$ and $Z$ denote random elements in $\X$, $\Y$ and $\Z$, respectively. $\dX$ is defined as $(X,Z)$, which is a random element in $\X \times \Z$. The probability law of a random variable $X$ is denoted by $P_X$ and the joint probability law of random variables $X$ and $Z$ is denoted by $P_{XZ}$. A measurable, positive definite kernel on $\X$ is denoted as $k_{\X}$ and its corresponding RKHS as $\H_{\X}$. Similarly we define  $k_{\Y}$, $\H_{\Y}$, $k_{\Z}$, $\H_{\Z}$, $k_{\dX}$ and $\H_{\dX}$. 
$\M(\Z)$ denotes the set of all finite signed Borel measures on $\Z$, $\M^{1}_{+}(\Z)$ denotes the set of all finite probability measures on $\Z$, $\M^{\theta}_{k}(\Z):=\{v \in \M(\Z)| \int k^{\theta}(z,z)d|v|(z) < \infty\}$ and $\M^{\theta}_{\rho}(\Z):=\{v \in \M(\Z)|\int \rho^{\theta}(z,z_0)~d|v|(z) < \infty \text{ for some } z_0 \in \Z\}$. The space of $r$-integrable functions w.r.t.~a $\sigma$-finite measure, $\mu$ on $\mathbb{R}^d$ is denoted as $L^r(\mathbb{R}^d,\mu)$ and if $\mu$ is a Lebesgue measure on $\mathbb{R}^d$, we denote it as $L^r(\mathbb{R}^d)$.

The symbol $X \indep Y|Z $ indicates the conditional independence of $X$ and $Y$ given $Z$. $\phi_{X}$ and $\phi_{Y}$ denote the characteristic functions of $X$ and $Y$ respectively and their joint characteristic function is denoted as $\phi_{XY}$. The conditional characteristic functions of $X$, $Y$ and $(X,Y)$ given $Z$ are denoted as $\phi_{X|Z}$, $\phi_{Y|Z}$ and $\phi_{XY|Z}$ respectively. 
\section{Conditional Distance Covariance}\label{Sec:dcov}
Distance covariance was proposed by \cite{szekely2007measuring} as a new measure of dependence between random vectors in arbitrary dimension. An interesting feature of distance covariance 
is that unlike the classical 
covariance, it is zero only if the random vectors are independent. Formally, the distance covariance (dCov) between two random vectors is defined as the weighted $L^2$ norm between the joint characteristic function and the product of marginal characteristic functions, i.e., $$\V^2(X,Y) = \| \phi_{XY}- \phi_{X}\phi_{Y} \|^2_{L^2(w)}= \frac{1}{c_p c_q} \int\int \frac{|\phi_{XY}(t,s) - \phi_{X}(t)\phi_{Y}(s)|^2}{\Vert t\Vert^{p+1} \Vert s\Vert^{q+1}}\,dt\,ds,$$ where $\phi_{XY}$ denotes the joint characteristic function of random variables $X\in \mathbb{R}^p$ and $Y\in\mathbb{R}^q$ with $\phi_X$ and $\phi_Y$ denoting their respective marginal characteristic functions. Here $c_p=\frac{\pi^{(p+1)/2}}{\Gamma((p+1)/2)}$, $c_q=\frac{\pi^{(q+1)/2}}{\Gamma((q+1)/2)}$ and $w(t,s)=\Vert t\Vert^{-p-1}\Vert s\Vert^{-q-1}$ with $\Vert t\Vert^2=\sum^p_{i=1}{t^2_i}$ for $t=(t_1,\ldots,t_p)$. A particular advantage of distance covariance is its compact representation in terms of certain expectation of pairwise Euclidean distances (\citealp{szekely2007measuring}):
\begin{eqnarray}\label{Eq:dcov}
\V^2(X,Y)&{}={}&
\mE_{XY}\mE_{X'Y'}\|X-X'\| \|Y-Y'\| + \mE_{X}\mE_{X'} \|X-X'\|\mE_{Y}\mE_{Y'} \|Y-Y'\| \nonumber\\
&{}{}&\qquad\quad-2 \mE_{XY}[\mE_{X'} \|X-X'\|\mE_{Y'} \|Y-Y'\|],
\end{eqnarray}
 which leads to straightforward empirical estimates by replacing the expectations with empirical estimators. Such an estimator has been used as a test statistic in independence testing and the resulting test is shown to be consistent if the marginal distributions have finite first moment (\citealp{szekely2007measuring}). 
 As a natural generalization, \cite{lyons2013distance} extended \eqref{Eq:dcov} to metric spaces of negative type and showed that the corresponding distance covariance---obtained by replacing the Euclidean metric by a metric of strongly negative type---is zero if and only if $X$ and $Y$ are independent. 

Extending the idea of distance covariance, recently, \cite{wang2015conditional} proposed a conditional version to measure conditional independence between random vectors of arbitrary dimension. To elaborate, 
let $X\in\mathbb{R}^p$, $Y\in\mathbb{R}^q$ and $Z\in\mathbb{R}^r$ be random vectors. 
The conditional distance covariance (CdCov) $\V(X,Y|Z)$ between random vectors $X$ and $Y$ with finite moments given $Z$ is defined as 
$$
\V^2(X,Y|Z) = \| \phi_{XY|Z} - \phi_{X|Z}\phi_{Y|Z} \|^2_{L^2(w)}
=  \int\int \frac{|\phi_{XY|Z}(t,s) - \phi_{X|Z}(t)\phi_{Y|Z}(s)|^2}{c_p c_q\Vert t\Vert^{p+1} \Vert s\Vert^{q+1}}\,dt\,ds,
$$
where 
$\phi_{XY|Z}(t,s) = \mE\left[e^{\sqrt{-1}\langle t,X\rangle + \sqrt{-1} \langle s,Y\rangle}|Z\right]$, $\phi_{X|Z}(t) = \phi_{XY|Z}(t,0)$ and $\phi_{Y|Z}(s) = \phi_{XY|Z}(0,s)$.
As a crucial property, CdCov is zero $P_Z$-almost surely if and only if  $X \indep Y|Z $. Similar to distance covariance, one advantage of this measure is that its sample version can be expressed elegantly as a $V$- or $U$-statistic, based on which
\cite{wang2015conditional} proposed a statistically consistent conditional independence test.

The conditional distance covariance defined above can also be computed in terms of the conditional expectations of pairwise Euclidean distances:
\begin{eqnarray}
\V^2(X,Y|Z)&{}={}&
\mE_{XY|Z}\mE_{X'Y'|Z}\|X-X'\| \|Y-Y'\| + \mE_{X|Z}\mE_{X'|Z} \|X-X'\|\mE_{Y|Z}\mE_{Y'|Z} \|Y-Y'\| \nonumber\\
&{}{}&\qquad- 2 \mE_{XY|Z}[\mE_{X'|Z} \|X-X'\|\mE_{Y'|Z} \|Y-Y'\|],\label{Eq:cond-dist}
\end{eqnarray}
where $X,Y|Z$ and $X',Y'|Z$ $\overset{\text{i.i.d.}}{\sim} P_{XY|Z}$. As carried out in \cite{lyons2013distance}, CdCov can be extended to metric spaces of negative type so that \eqref{Eq:cond-dist}
can be written as 
\begin{eqnarray}\label{Eq:metric-cond}
\V^2_{\rho_{\X},\rho_{\Y}}(X,Y|Z)&{} ={}&
\mE_{XY|Z}\mE_{X'Y'|Z}\rho_{\X}(X,X')\rho_{\Y}(Y,Y')
+ \mE_{X|Z}\mE_{X'|Z} \rho_{\X}(X,X')\mE_{Y|Z}\mE_{Y'|Z} \rho_{\Y}(Y,Y') \nonumber\\
&{}{}&\qquad\quad -2 \mE_{XY|Z}[\mE_{X'|Z} \rho_{\X}(X,X')\mE_{Y'|Z} \rho_{\Y}(Y,Y')],
\end{eqnarray}
where $\rho_\X$ and $\rho_\Y$ are metrics of strongly negative type defined on spaces $\X$ and $\Y$ respectively with $X|Z \sim P_{X|Z} \in \M^{2}_{\rho_{\X}}(\X)$ and $Y |Z\sim P_{Y|Z} \in \M^{2}_{\rho_{\Y}}(\Y)$ for any $Z$. 
The moment conditions ensure that the expectations are finite. When $\rho_\X$ and $\rho_\Y$ are strongly negative, then clearly \eqref{Eq:metric-cond} is zero if and if $X \indep Y|Z $. Equivalently, gCdCov can be represented in an integral form,
\begin{equation}
\V^2_{\rho_{\X},\rho_{\Y}}(X,Y|Z)= \int (\rho_{\X}\rho_{\Y})(x,y,x',y')\, d[P_{XY|Z}- P_{X|Z} P_{Y|Z}]^2(x,y,x',y'),\nonumber
\end{equation}
where $\rho_{\X}\rho_{\Y}$ is viewed as a function on $(\X \times \Y) \times (\X \times \Y)$ for any fixed $Z$ and $P^2:=P\times P$.

\section{Kernel Measures of Conditional Independence}\label{Sec:kernel}
First, in Section~\ref{subsec:prelim}, we present preliminaries on RKHS embedding of probability measures and introduce kernel measures of independence. Based on this discussion, in Section~\ref{subsec:hscic}, we develop a kernel measure of conditional independence (we call it as Hilbert-Schmidt conditional independence criterion---HSCIC) that is related to gCdCov (and therefore CdCov) discussed in Section~\ref{Sec:dcov}. We also present an interpretation for gCdCov through conditional cross-covariance operator formulation for HSCIC.
\subsection{RKHS embedding of probabilities}\label{subsec:prelim}
In the machine learning literature, the notion of embedding probability measures in an RKHS has gained lot of attention and has been applied in goodness-of-fit \citep{balasubramanian2017optimality}, 
two-sample \citep{gretton2006twosample,gretton2012kernel}, independence \citep{gretton2008kernel} and conditional independence  \citep{fukumizu2008kernel,zhang2011kernel} testing. To elaborate, given a probability measure $P\in\M^{1/2}_k(\X)$, its RKHS embedding is defined \citep{smola2007hilbert} as $$P\mapsto \mu_P:=\int_{\X} k(\cdot,x)\,dP(x)\in \H_k,$$ where $\H_k$ is an RKHS with $k$ as the reproducing kernel. Based on this embedding, a distance on the space of probabilities can be defined through the distance between the embeddings, i.e., $\D_k(P,Q)=\Vert \mu_P-\mu_Q\Vert_{\H_k}$, called the \emph{kernel distance} or \emph{maximum mean discrepancy} \citep{gretton2006twosample}.
If the map $P\mapsto \mu_P$ is injective, then the kernel $k$ that induces $\mu_P$ is said to be \emph{characteristic} \citep{fukumizu2009characteristic,sriperumbudur2010hilbert} and therefore $\D_k(P,Q)$ induces a metric on $\M^{1/2}_k(\X)$. Using the reproducing property of the kernel, it can be shown that
\begin{eqnarray}\label{Eq:mmd}
\D^2_k(P,Q)&{}={}&
\mE_{XX'}k(X,X') + \mE_{YY'}k(Y,Y')-2\mE_{XY}k(X,Y),\nonumber
\end{eqnarray}
where $X,X'\stackrel{i.i.d.}{\sim}P$ and $Y,Y'\stackrel{i.i.d.}{\sim}Q$.
Extending this distance to probability measures on product spaces, particularly the joint measure $P_{XY}$ and product of marginals $P_XP_Y$, yields a measure of dependence between two random variables $X$ and $Y$ defined on measurable spaces $\X$ and $\Y$, called the Hilbert-Schmidt Independence Criterion (HSIC), which is defined \citep{gretton2005measuring} as
\begin{eqnarray}\label{Eq:hsic}
\D^2_{k_\X k_\Y}(P_{XY},P_XP_Y)&{}={}&
\mE_{XY}\mE_{X'Y'}k_\X(X,X')k_\Y(Y,Y') + \mE_{X}\mE_{X'}k_\X(X,X')\mE_{Y}\mE_{Y'}k_\Y(Y,Y')\nonumber\\
&{}{}&\qquad-2\mE_{XY}[\mE_{X'}k_\X(X,X') \mE_{Y'}k_\Y(Y,Y')]\\
&{}={}&\int (k_{\X}k_\Y)(x,y,x',y')\,d[P_{XY}- P_{X} P_{Y}]^2(x,y,x',y'),\nonumber
\end{eqnarray}
where the product kernel $k_\X k_\Y$ is a reproducing kernel for the tensor RKHS $\H_\X\otimes\H_\Y$ and $(X',Y')$ is an independent copy of $(X, Y)$. If the kernels $k_\X$ and $k_\Y$ are characteristic, then HSIC characterizes independence \citep{szabo2018charactersitc}, i.e., $\D_{k_\X k_\Y}(P_{XY},P_XP_Y)$ $=0$ if and only if $X\indep Y$. 
An empirical version of \eqref{Eq:hsic} has been used as a test statistic in independence testing and the resultant test is shown to be consistent against all alternatives as long as $k_\X$ and $k_\Y$ are characteristic \citep{gretton2008kernel}. An interesting connection between kernel-based HSIC and distance-based dCov is shown by \citet{sejdinovic2013equivalence} that dCov in \eqref{Eq:dcov} is in fact a special case of HSIC and HSIC is equivalent to the generalized dCov introduced by \citet{lyons2013distance}.
This result provides a unifying framework for the distance and kernel-based independence measures. With this background, in the rest of the paper, we explore the relation between distance and kernel-based measures of conditional independence.

\subsection{Hilbert-Schmidt conditional independence criterion}\label{subsec:hscic}
For appropriate choice of kernels and distances, the following result provides a kernel-equivalent of gCdCov, which we refer to as the Hilbert-Schmidt conditional independence criterion (HSCIC).
\begin{theorem}\label{thm5_1}
Let $(\X,\rho_{\X})$ and $(\Y,\rho_{\Y})$ be semimetric spaces of negative type. Suppose $X|Z=z \sim P_{X|Z=z} \in \M^2_{\rho_{\X}}(\X)$ and $Y|Z=z \sim P_{Y|Z=z} \in \M^2_{\rho_{\Y}}(\Y)$, having joint distribution $P_{XY|Z=z}$ for all $z\in \Z$. If $k_{\X}$ and $k_{\Y}$ are pd kernels on $\X$ and $\Y$ that are distance-induced, i.e.,
$$k_\X(x,x')=\rho_\X(x,\theta)+\rho_\X(x',\theta)-\rho_\X(x,x')$$
and 
$$k_\Y(y,y')=\rho_\Y(y,\theta')+\rho_\Y(y',\theta')-\rho_\Y(y,y')$$
for some $\theta\in\X$ and $\theta'\in\Y$. Then
\begin{eqnarray}
\V^2_{\rho_{\X},\rho_{\Y}}(X,Y|Z=z) &{}={}& 
\D^2_{k_\X k_\Y}(P_{XY|Z=z}, P_{X|Z=z} P_{Y|Z=z}).\label{Eq:equiv0}
\end{eqnarray}
On the other hand, suppose $k_\X$ and $k_\Y$ are pd kernels on $\X$ and $\Y$ respectively with $X|Z=z\sim P_{X|Z=z}\in \M^2_{k_{\X}}(\X)$ and $Y|Z=z \sim P_{Y|Z=z} \in \M^2_{k_{\Y}}(\Y)$, having joint distribution $P_{XY|Z=z}$ for all $z\in \Z$. If $\rho_\X$ and $\rho_\Y$ are semi-metrics on $\X$ and $\Y$ that are kernel-induced, i.e.,
$$\rho_\X(x,x')=\frac{k_\X (x,x)+k_\X (x',x')}{2}-k_\X (x,x')$$ and $$\rho_\Y(y,y')=\frac{k_\Y (y,y)+k_\Y (y',y')}{2}-k_\Y (y,y'),$$
then \eqref{Eq:equiv0} holds.
\end{theorem}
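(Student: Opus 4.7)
The plan is to recognize that both sides of \eqref{Eq:equiv0} admit a common integral representation against the signed product measure $\mu_z^2 := (P_{XY|Z=z} - P_{X|Z=z}P_{Y|Z=z})^{\otimes 2}$, namely
\[
\V^2_{\rho_\X,\rho_\Y}(X,Y|Z=z) = \int \rho_\X(x,x')\rho_\Y(y,y')\, d\mu_z^2,
\]
as already displayed just after \eqref{Eq:metric-cond}, and, via the conditional analog of \eqref{Eq:hsic},
\[
\D^2_{k_\X k_\Y}(P_{XY|Z=z},P_{X|Z=z}P_{Y|Z=z}) = \int k_\X(x,x')k_\Y(y,y')\, d\mu_z^2.
\]
It then suffices to prove $\int \rho_\X\rho_\Y\, d\mu_z^2 = \int k_\X k_\Y\, d\mu_z^2$ under either of the two pairings in the theorem. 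This is exactly the conditional, pointwise-in-$z$ analog of the HSIC/dCov equivalence of \citet{sejdinovic2013equivalence}, and the proof mirrors theirs.

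The engine is a zero-marginal observation: since $P_{XY|Z=z}$ and $P_{X|Z=z}P_{Y|Z=z}$ agree on their $\X$- and $\Y$-marginals, the signed measure $\mu_z$ has zero total mass and zero $x$- and $y$-marginals. Hence, for any $\mu_z$-integrable $h$ on $\X$ or $g$ on $\Y$,
\[
\int d\mu_z = \int h(x)\, d\mu_z(x,y) = \int g(y)\, d\mu_z(x,y) = 0,
\]
and by Fubini any function on $(\X\times\Y)^2$ that factors with some factor depending on only one of the four coordinates $x,x',y,y'$ integrates to zero against $\mu_z^2$.

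For the first direction, write $u(x) := \rho_\X(x,\theta)$ and $v(y) := \rho_\Y(y,\theta')$ and substitute
\[
k_\X(x,x')k_\Y(y,y') = \bigl[u(x)+u(x')-\rho_\X(x,x')\bigr]\bigl[v(y)+v(y')-\rho_\Y(y,y')\bigr].
\]
Expanding yields nine terms; eight of them (e.g.\ $u(x)v(y')$, $u(x)\rho_\Y(y,y')$, $\rho_\X(x,x')v(y)$, etc.) have at least one factor depending on only one of $x,x',y,y'$ and thus vanish against $\mu_z^2$ by the previous paragraph. The only survivor is $\rho_\X(x,x')\rho_\Y(y,y')$ with coefficient $(-1)(-1)=+1$, yielding \eqref{Eq:equiv0}. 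The second direction is the symmetric argument: substitute $\rho_\X(x,x') = \tfrac{1}{2}\bigl(k_\X(x,x)+k_\X(x',x')\bigr)-k_\X(x,x')$ (and similarly for $\Y$) into $\rho_\X\rho_\Y$ and apply the same expand-and-cancel step, which now leaves only $k_\X k_\Y$.

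The main obstacle is bookkeeping: one must justify Fubini by verifying absolute integrability of each of the nine summands against $|\mu_z|^{\otimes 2}$. In direction one, this follows from the moment hypotheses $P_{X|Z=z}\in\M^2_{\rho_\X}(\X)$ and $P_{Y|Z=z}\in\M^2_{\rho_\Y}(\Y)$ together with the quasi-triangle inequality $\rho_\X(x,x')\le 2\rho_\X(x,\theta)+2\rho_\X(x',\theta)$, which holds for semimetrics of negative type via the Hilbert space embedding $\rho_\X(x,x')=\|\phi(x)-\phi(x')\|^2$. In direction two, the Cauchy--Schwarz bound $|k_\X(x,x')|\le\sqrt{k_\X(x,x)k_\X(x',x')}$ combined with $P_{X|Z=z}\in\M^2_{k_\X}(\X)$ plays the analogous role. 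Finally, the distance-induced $k_\X$ in the first hypothesis is positive definite precisely because $\rho_\X$ is of negative type, so $\H_\X\otimes\H_\Y$ and the HSCIC it defines are legitimate.
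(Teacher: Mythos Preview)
Your proof is correct and follows essentially the same approach as the paper: both introduce the signed measure $v_z=P_{XY|Z=z}-P_{X|Z=z}P_{Y|Z=z}$ (your $\mu_z$), expand the product $k_\X k_\Y$ (respectively $\rho_\X\rho_\Y$) after substituting the distance-induced (respectively kernel-induced) expression, and kill all cross terms using the fact that $v_z$ has zero marginals. Your treatment is in fact slightly more thorough, since you explicitly justify the Fubini step via the moment hypotheses and the quasi-triangle/Cauchy--Schwarz bounds, which the paper's proof leaves implicit.
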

\begin{proof}
Define $v_z:=P_{XY|Z=z}-P_{X|Z=z} P_{Y|Z=z}$. Suppose $k_\X$ and $k_\Y$ are distance-induced. Then
\begin{eqnarray}
\D^2_{k_\X k_\Y}(P_{XY|Z=z},P_{X|Z=z} P_{Y|Z=z})&{}={}& \int \int k_{\X}(x,x')k_{\Y}(y,y')\,dv_z(x,y)\,dv_z(x',y'),\nonumber\\
&{}={}&\int\int \left(\rho_\X(x,\theta)+\rho_\X(x',\theta)-\rho_\X(x,x')\right)\nonumber\\
&{}{}&\qquad\times \left(\rho_\Y(y,\theta')+\rho_\Y(y',\theta')-\rho_\Y(y,y')\right)\,dv_z(x,y)\,dv_z(x',y'),\nonumber\\
&{}={}&\int \int
\rho_{\X}(x,x') \rho_{\Y}(y,y')\,dv_z(x,y)\,dv_z(x',y')\nonumber\\
&{}={}&\V^2_{\rho_{\X},\rho_{\Y}}(X,Y|Z=z),\nonumber
\end{eqnarray}
where we used the facts that $v_z(\X \times \Y)=0$ for any $z$ and $\int g(x,y,x',y')\,dv_z(x,y)\,dv_z(x',y')=0$ when $g$ does not depend on one or more of its arguments, since $v_z$ also has zero marginal measures. On the other hand, suppose $\rho_\X$ and $\rho_\Y$ are kernel-induced. Clearly they are of negative type. Then
\begin{eqnarray}
\V^2_{\rho_{\X},\rho_{\Y}}(X,Y|Z=z) &{}={}&\int \int
\rho_{\X}(x,x') \rho_{\Y}(y,y')\,dv_z(x,y)\,dv_z(x',y')\nonumber\\
&{}={}& \int \int \left(\frac{k_{\X}(x,x) + k_{\X}(x',x')}{2} - k_{\X}(x,x')\right) \nonumber\\
&{}{}&\qquad\times \left(\frac{k_{\Y}(y,y) + k_{\Y}(y',y')}{2} -k_{\Y}(y,y')\right) \,dv_z(x,y)\,dv_z(x',y')\nonumber\\
&{}={}& \int \int k_{\X}(x,x')k_{\Y}(y,y')\,dv_z(x,y)\,dv_z(x',y'),\nonumber\\
&{}={}&\D^2_{k_\X k_\Y}(P_{XY|Z=z},P_{X|Z=z} P_{Y|Z=z}),\nonumber
\end{eqnarray}
where we again used the above mentioned facts about $v_z$.
\end{proof}
Since $\rho_\X$ and $\rho_\Y$ are induced by the Euclidean norm, choosing $k_\X(x,x')=\Vert x\Vert+\Vert x'\Vert-\Vert x-x'\Vert,\,x,x'\in\bb{R}^p$ and $k_\Y(y,y')=\Vert y\Vert+\Vert y'\Vert-\Vert y-y'\Vert,\,y,y'\in\bb{R}^q$ yields CdCov.

While HSCIC is a natural measure of conditional independence, in the kernel literature, however, a different measure has been widely used \citep{fukumizu2004dimensionality,fukumizu2008kernel,zhang2011kernel}, which is based on the Hilbert-Schmidt norm of a certain \emph{conditional cross-covariance operator}. Before we introduce the conditional cross-covariance operator and these other measures of conditional independence (which we do in Section~\ref{subsec:relation}) , first we will briefly discuss how HSIC is related to the Hilbert-Schmidt norm of a cross-covariance operator so that its extension to the conditional version is natural. 

For random variables $X\sim P_X$ and $Y\sim P_Y$ with joint distribution $P_{XY}$ such that $P_X\in\M^1_{k_\X}(\X)$ and $P_Y\in\M^1_{k_\Y}(\Y)$, there exists a unique bounded linear operator, called the cross-covariance operator \citep{baker1973joint,fukumizu2004dimensionality}, 
$\Sigma_{YX}:\H_{k_\X}\rightarrow\H_{k_\Y}$ such that $$\langle g, \Sigma_{YX} f \rangle_{\H_{k_{\Y}}}= E_{XY}[f(X)g(Y)]-E_{X}[f(X)]E_{Y}[g(Y)],\,\,\forall\,f\in\H_{k_\X},\,g\in\H_{k_\Y}.$$ In fact, using the reproducing property that $f(x)=\langle f,k_\X(\cdot,x)\rangle_{\H_{k_\X}},\,\forall\,x\in\X$ and $g(y)=\langle g,k_\Y(\cdot,y)\rangle_{\H_{k_\Y}},\,\forall\,y\in\Y$, it follows that \begin{equation}\Sigma_{YX}=\int\int k_\Y(\cdot,y)\otimes k_\X(\cdot,x)\,dP_{XY}(x,y)-\int k_\Y(\cdot,y)\,dP_Y(y) \otimes \int k_\X(\cdot,x)\,dP_X(x),\label{Eq:cov-op}\end{equation}
where $\otimes$ denotes the tensor product. Clearly, $\Sigma_{YX}$ is a natural generalization of the finite-dimensional covariance matrix between two random vectors $X\in \mathbb{R}^p$ and $Y\in \mathbb{R}^q$. Based on \eqref{Eq:cov-op} and the reproducing property, it can be verified that 
\begin{eqnarray}
\Vert \Sigma_{YX}\Vert^2_{HS}&{}={}& \left\Vert \int\int k_\X(\cdot,x)\otimes k_\Y(\cdot,y)\,d(P_{XY}-P_XP_Y)(x,y)\right\Vert^2_{HS}\nonumber\\
&{}={}&\int\int\int\int\langle k_\X(\cdot,x)\otimes k_\Y(\cdot,y),k_\X(\cdot,x')\otimes k_\Y(\cdot,y')\rangle_{HS}\,d(P_{XY}-P_XP_Y)(x,y)\nonumber\\
&{}{}&\qquad\qquad\qquad\times\,d(P_{XY}-P_XP_Y)(x',y')\nonumber\\
&{}={}&\D^2_{k_\X k_\Y}(P_{XY},P_X P_Y),\label{Eq:simi}
\end{eqnarray}
where $\Vert\cdot\Vert_{HS}$ denotes the Hilbert-Schmidt norm. Since HSCIC is a conditional version of HSIC and since the latter is the Hilbert-Schmidt norm of the cross-variance operator, it is natural to extend $\Sigma_{YX}$ to its conditional version as the bounded linear operator $\dot{\Sigma}_{YX|Z}:\H_{k_\X}\rightarrow\H_{k_\Y}$ (actually this is not just one operator, but a collection of operators indexed by $Z$) such that
$$\langle g, \dot{\Sigma}_{YX|Z} f \rangle_{\H_{k_{\Y}}}= E_{XY|Z}[f(X)g(Y)|Z]-E_{X|Z}[f(X)|Z]E_{Y|Z}[g(Y)|Z],\,\,\forall\,f\in\H_{k_\X},\,g\in\H_{k_\Y},$$
thereby yielding
\begin{equation}\dot{\Sigma}_{YX|Z}=\int\int k_\Y(\cdot,y)\otimes k_\X(\cdot,x)\,dP_{XY|Z}(x,y)-\int k_\Y(\cdot,y)\,dP_{Y|Z}(y) \otimes \int k_\X(\cdot,x)\,dP_{X|Z}(x).\nonumber
\end{equation}
Similar to \eqref{Eq:simi}, it is easy to verify that $$\Vert \dot{\Sigma}_{YX|Z}\Vert^2_{HS}=\D^2_{k_{\X} k_{\Y}}(P_{XY|Z},P_{X|Z}P_{Y|Z}).$$ Therefore if $k_\X$ and $k_\Y$ are characteristic, then $X\indep Y|Z \Longleftrightarrow \dot{\Sigma}_{YX|Z}=0$, $P_Z$-a.s.

However, in the kernel literature, to the best of our knowledge, HSCIC has not been used as a measure of conditional independence probably because it is not a single measure but a family of measures indexed by $Z$---one can obtain a single measure of conditional independence by considering the weighted average of HSCIC, i.e., \begin{equation}\Cal{D}_\Lambda(P_{XY|Z},P_{X|Z}P_{Y|Z}):=\int \Vert \dot{\Sigma}_{YX|Z=z}\Vert^2_{HS}\,d\Lambda(z),\label{Eq:single}
\end{equation}
where $\Lambda$ is a finite positive measure on $\Z$, which can be chosen to be $P_Z$. Instead, a related version of HSCIC has been widely used, which we will discuss and explore its connection to gCdCov in the following section.
\section{Relation between RKHS and distance-based conditional independence measures}\label{subsec:relation}
Since $\dot{\Sigma}_{YX|Z}$ is a family of operators (indexed by $Z$), \citet{fukumizu2004dimensionality} considered an alternate operator, called the \emph{conditional cross-covariance operator}, which is defined as follows. Suppose $P_X\in \M^1_{k_\X}(\X)$, $P_Y\in \M^1_{k_\Y}(\Y)$ and $P_Z\in \M^1_{k_\Z}(\Z)$. Then there exists a unique bounded linear operator $\Sigma_{YX|Z}$ such that
\begin{eqnarray}\label{Eq:cond-cov}
\langle g, \Sigma_{YX|Z} f \rangle_{\H_{k_{\Y}}}&=& \mathbb{E}_{XY}[f(X)g(Y)]-\mathbb{E}_Z[\mathbb{E}_{X|Z}[f(X)]\mathbb{E}_{Y|Z}[g(Y)]]\nonumber\\
&=&\mathbb{E}_Z[\text{Cov}_{XY|Z}(f(X),g(Y)|Z)]\nonumber
\end{eqnarray}
for all $f\in\H_{k_\X}$ and $g\in\H_{k_\Y}$. As above, using the reproducing property, it can be shown that
$$\Sigma_{YX|Z}=\int\int\int k_\Y(\cdot,y)\otimes k_\X(\cdot,x)\,d\left[P_{XY|Z=z}-P_{X|Z=z}P_{Y|Z=z}\right](x,y)\,dP_Z(z)=\mathbb{E}_Z[\dot{\Sigma}_{YX|Z}].$$
However, unlike $\dot{\Sigma}_{YX|Z}$, the conditional cross-covariance operator $\Sigma_{YX|Z}$ does not characterize conditional independence since $\Sigma_{YX|Z}=0$---assuming $k_\X$ and $k_\Y$ to be characteristic---only implies $P_{XY}=\mathbb{E}_Z[P_{X|Z}P_{Y|Z}]$ and not $\dot{\Sigma}_{YX|Z}=0$, $P_Z$-a.s. (\citealp[Theorem 8]{fukumizu2004dimensionality}). Therefore, \citet[Corollary 9]{fukumizu2004dimensionality} considered $Z$ as a part of $X$ by defining $\ddot{X}:=(X,Z)$ and showed that $\Sigma_{Y\ddot{X}|Z}=0$ if and only if $X\indep Y|Z$, assuming $k_\X$, $k_\Y$ and $k_\Z$ to be characteristic. This is indeed the case since if $k_\X$, $k_\Y$ and $k_\Z$ are characteristic, then $\Sigma_{Y\ddot{X}|Z}=0$ implies $\mathbb{E}_Z[\dot{\Sigma}_{Y\ddot{X}|Z}]=0$ and therefore $\mathbb{E}_Z[P_{Y\ddot{X}|Z}]=\mathbb{E}_Z[P_{Y|Z}P_{\ddot{X}|Z}]$, i.e., 
\begin{eqnarray}
\mathbb{E}_Z[\mathbb{E}_{XY|Z}[\mathds{1}\{X\in A, Y\in B\}|Z\in C ]\mathds{1}\{Z\in C\}]&{}={}&\mathbb{E}_Z[\mathbb{E}_{X|Z}[\mathds{1}\{X\in A\}|Z\in C]\nonumber\\
&&\qquad\times \mathbb{E}_{Y|Z}[\mathds{1}\{Y\in B\}|Z\in C]\mathds{1}\{Z\in C\}]\nonumber
\end{eqnarray}
for all $A\in \mathcal{B}_\X$, $B\in\mathcal{B}_\Y$ and $C\in\mathcal{B}_\Z$, where $\mathcal{B}_\X$, $\mathcal{B}_\Y$ and $\mathcal{B}_\Z$ are the Borel $\sigma$-algebras associated with $\X$, $\Y$ and $\Z$ respectively. This implies $P_{XY|Z}(A\times B)=(P_{X|Z}P_{Y|Z})(A\times B)$ for all $A\in\mathcal{B}_\X$ and $B\in\mathcal{B}_\Y$, $P_Z$-a.s., implying that $X\indep Y|Z$. Hence $\Vert \Sigma_{Y\ddot{X}|Z}\Vert^2_{HS}$ can be used as a measure of conditional independence, which we refer to it as HS\"{C}IC. 

The goal of this section is to explore the distance counterpart of HS\"{C}IC and understand how it is related to CdCov, gCdCov, and $\mathcal{D}_\Lambda$ defined in \eqref{Eq:single}. To this end, we first provide an expression for $\Vert \Sigma_{Y\ddot{X}|Z}\Vert^2_{HS}$ in terms of kernels, using which we obtain an expression in terms of distances.
\begin{theorem} \label{pro:hsc}
Let $P_X\in \M^2_{k_\X}(\X)$, $P_Y\in \M^2_{k_\Y}(\Y)$ and $P_Z\in \M^1_{k_\Z}(\Z)$ with joint distribution $P_{XYZ}$. 
Denote $\ddot{X}=(X,Z)$ and $v_z=P_{XY|Z=z}-P_{X|Z=z} P_{Y|Z=z}$. Then
\begin{eqnarray}
\|\Sigma_{Y\ddot{X}|Z}\|^2_{HS}&{}={}& 
\int\int k_\Z(z,z')\left\langle \dot{\Sigma}_{YX|Z=z},\dot{\Sigma}_{YX|Z=z'}\right\rangle_{HS}\,dP_Z(z)\,dP_Z(z')\nonumber\\
&{}={}&{\displaystyle \int\int }k_{\Z}(z,z') h(z,z')\, d P_{Z}(z)\,d P_{Z}(z'),\label{Eq:maineq}
\end{eqnarray}
where
$h(z,z')= {\displaystyle \int \int} k_{\X}(x,x')k_{\Y}(y,y')\, d v_z(x,y)\, d v_{z'}(x',y').$
Suppose $k_\X$ and $k_\Y$ are distance-induced, i.e.,
$$k_\X(x,x')=\rho_\X(x,\theta)+\rho_\X(x',\theta)-\rho_\X(x,x')\,\,\,\text{and}\,\,\,k_\Y(y,y')=\rho_\Y(y,\theta')+\rho_\Y(y',\theta')-\rho_\Y(y,y')$$
for some $\theta\in\X$ and $\theta'\in\Y$.
Then
$$h(z,z')=\int\int \rho_\X(x,x') \rho_\Y(y,y')\,dv_z(x,y)\,dv_{z'}(x',y').$$
\end{theorem}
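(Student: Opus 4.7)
The plan is to compute $\|\Sigma_{Y\ddot X|Z}\|^2_{HS}$ by exploiting the tensor structure of the kernel on $\X\times\Z$, then reduce the distance-induced case using the vanishing marginals of $v_z$.

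First, I would write $\Sigma_{Y\ddot X|Z}$ as an integral operator. Taking $k_{\dX}((x,z),(x',z'))=k_\X(x,x')\,k_\Z(z,z')$ so that $\H_{\dX}\cong\H_\X\otimes\H_\Z$, the feature map factors as $k_{\dX}(\cdot,(x,z))=k_\X(\cdot,x)\otimes k_\Z(\cdot,z)$. Conditional on $Z=z$, the auxiliary variable $\ddot X=(X,Z)$ has its $\Z$-coordinate degenerate at $z$, so $P_{\ddot X Y|Z=z}=P_{XY|Z=z}\otimes\delta_z$ and $P_{\ddot X|Z=z}=P_{X|Z=z}\otimes\delta_z$. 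Substituting these in the formula for $\Sigma_{Y\ddot X|Z}$ (displayed just before the theorem) and pulling the factor $k_\Z(\cdot,z)$ outside the inner $(x,y)$-integral yields
\begin{equation*}
\Sigma_{Y\ddot X|Z}=\int \dot\Sigma_{YX|Z=z}\otimes k_\Z(\cdot,z)\,dP_Z(z),
\end{equation*}
where I used the definition of $\dot\Sigma_{YX|Z=z}$ to recognize the inner integral as $\int k_\Y(\cdot,y)\otimes k_\X(\cdot,x)\,dv_z(x,y)=\dot\Sigma_{YX|Z=z}$.

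Next I would compute the Hilbert--Schmidt norm. Identifying HS operators with tensors, $\|A\otimes u\|_{HS}^2=\|A\|_{HS}^2\|u\|^2$ and more generally $\langle A\otimes u,B\otimes v\rangle_{HS}=\langle A,B\rangle_{HS}\langle u,v\rangle$. Expanding $\|\Sigma_{Y\ddot X|Z}\|^2_{HS}$ as a double integral and using $\langle k_\Z(\cdot,z),k_\Z(\cdot,z')\rangle_{\H_\Z}=k_\Z(z,z')$ gives the first displayed equality in \eqref{Eq:maineq}. To obtain the second, I substitute the integral formula for $\dot\Sigma_{YX|Z=z}$ into $\langle\dot\Sigma_{YX|Z=z},\dot\Sigma_{YX|Z=z'}\rangle_{HS}$ and apply the same tensor identity to get $\langle k_\Y(\cdot,y)\otimes k_\X(\cdot,x),k_\Y(\cdot,y')\otimes k_\X(\cdot,x')\rangle_{HS}=k_\X(x,x')k_\Y(y,y')$, yielding exactly $h(z,z')$ as defined.

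For the last assertion on distance-induced kernels, I would substitute the expressions for $k_\X$ and $k_\Y$ into $h(z,z')$ and expand into nine terms. The key observation, already used in the proof of Theorem~\ref{thm5_1}, is that both $v_z$ and $v_{z'}$ are finite signed measures with zero total mass and zero marginal measures, since $v_z=P_{XY|Z=z}-P_{X|Z=z}P_{Y|Z=z}$. Any summand of the expansion that, as a function of the four variables $(x,y,x',y')$, does not genuinely couple the pair $(x,y)$ to the pair $(x',y')$ through the $\rho_\X(x,x')\rho_\Y(y,y')$ block vanishes: terms containing $\rho_\X(x,\theta)$ or $\rho_\Y(y,\theta')$ factor into marginal integrals that kill $v_z$, and similarly for primed terms against $v_{z'}$; cross terms such as $\rho_\X(x,x')\rho_\Y(y,\theta')$ vanish because the remaining $y'$ (or $x'$) dependence integrates to zero against a marginal of $v_{z'}$. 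Only the single term $\rho_\X(x,x')\rho_\Y(y,y')$ survives, giving the claimed formula. The only delicate step is justifying the interchange of integration order and the tensor/HS computations above under the stated moment conditions $P_X\in\M^2_{k_\X}(\X)$, $P_Y\in\M^2_{k_\Y}(\Y)$, $P_Z\in\M^1_{k_\Z}(\Z)$, which guarantee that $\dot\Sigma_{YX|Z=z}$ is HS-valued and Bochner integrable against $P_Z$; this is the main technical point but follows from Cauchy--Schwarz applied to $\|\dot\Sigma_{YX|Z=z}\|_{HS}^2$ together with Fubini.
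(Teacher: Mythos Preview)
Your proposal is correct and follows essentially the same approach as the paper: first establish $\Sigma_{Y\ddot X|Z}=\int \dot\Sigma_{YX|Z=z}\otimes k_\Z(\cdot,z)\,dP_Z(z)$ via the tensor factorization of $k_{\dX}$ and the degeneracy of $\ddot X|Z=z$, then expand the HS norm as a double integral using $\langle A\otimes u,B\otimes v\rangle_{HS}=\langle A,B\rangle_{HS}\langle u,v\rangle$ and the reproducing property of $k_\Z$, and finally handle the distance-induced case by the vanishing-marginals argument from Theorem~\ref{thm5_1}. Your additional remark on Bochner integrability and Fubini is a welcome technical point that the paper leaves implicit.
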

\begin{proof}
Note that 
\begin{eqnarray}
\Sigma_{Y\ddot{X}|Z}&{}={}&\mathbb{E}_Z[\dot{\Sigma}_{Y\ddot{X}|Z}]\nonumber\\
&{}={}&\mathbb{E}_Z[\mathbb{E}_{Y\ddot{X}|Z}[k_\Y(\cdot,Y)\otimes (k_\X k_\Z)(\cdot,\ddot{X})|Z]]\nonumber\\
&{}{}&\qquad-\mathbb{E}_Z[\mathbb{E}_{Y|Z}[k_\Y(\cdot,Y)|Z]\otimes \mathbb{E}_{\ddot{X}|Z}[(k_\X k_\Z)(\cdot,\ddot{X})|Z]]\nonumber\\
&{}={}& \mathbb{E}_Z[\mathbb{E}_{YX|Z}[k_\Y(\cdot,Y)\otimes k_\X (\cdot,X)|Z]\otimes k_\Z(\cdot,Z)]\nonumber\\
&{}{}&\qquad-\mathbb{E}_Z[\mathbb{E}_{Y|Z}[k_\Y(\cdot,Y)|Z]\otimes \mathbb{E}_{X|Z}[k_\X (\cdot,X)|Z]\otimes k_\Z(\cdot,Z)]\nonumber\\
&{}={}&\mathbb{E}_Z[\dot{\Sigma}_{YX|Z}\otimes k_\Z(\cdot,Z)].\nonumber
\end{eqnarray}
Therefore,
\begin{eqnarray}
\Vert \Sigma_{Y\ddot{X}|Z}\Vert^2_{HS}&{}={}&\left\Vert \mathbb{E}_Z[\dot{\Sigma}_{YX|Z}\otimes k_\Z(\cdot,Z)]\right\Vert^2_{HS}=\left\langle \mathbb{E}_Z[\dot{\Sigma}_{YX|Z}\otimes k_\Z(\cdot,Z)], \mathbb{E}_Z[\dot{\Sigma}_{YX|Z}\otimes k_\Z(\cdot,Z)]\right\rangle_{HS}\nonumber\\
&{}={}&\mathbb{E}_Z\mathbb{E}_{Z'}\left\langle \dot{\Sigma}_{YX|Z}\otimes k_\Z(\cdot,Z),\dot{\Sigma}_{YX|Z'}\otimes k_\Z(\cdot,Z')\right\rangle_{HS}\nonumber\\
&{}={}&\mathbb{E}_Z\mathbb{E}_{Z'}\left\langle
\dot{\Sigma}_{YX|Z},\dot{\Sigma}_{YX|Z'}\right\rangle_{HS}\langle k_\Z(\cdot,Z),k_\Z(\cdot,Z')\rangle_{\H_{k_{\Z}}}\nonumber\\
&{}={}&\mathbb{E}_Z\mathbb{E}_{Z'}\left\langle
\dot{\Sigma}_{YX|Z},\dot{\Sigma}_{YX|Z'}\right\rangle_{HS}k_\Z(Z,Z').\label{Eq:hs-norm}
\end{eqnarray}
Note that $\dot{\Sigma}_{YX|Z}=\int k_\Y(\cdot,y)\otimes k_\X(\cdot,x)\,dv_z(x,y)$ and therefore 
\begin{eqnarray}
\left\langle
\dot{\Sigma}_{YX|Z},\dot{\Sigma}_{YX|Z'}\right\rangle_{HS}&{}={}&\left\langle\int k_\Y(\cdot,y)\otimes k_\X(\cdot,x)\,dv_z(x,y),\int k_\Y(\cdot,y)\otimes k_\X(\cdot,x)\,dv_{z'}(x,y) \right\rangle_{HS}\nonumber\\
&{}={}&\int\int \left\langle k_\Y(\cdot,y)\otimes k_\X(\cdot,x),k_\Y(\cdot,y')\otimes k_\X(\cdot,x') \right\rangle\,dv_z(x,y)\,dv_{z'}(x',y')\nonumber\\
&{}={}&\int\int \left\langle k_\Y(\cdot,y), k_\Y(\cdot,y')\right\rangle_{\H_{k_\Y}}\left\langle k_\X(\cdot,x), k_\X(\cdot,x') \right\rangle_{\H_{k_\X}}\,dv_z(x,y)\,dv_{z'}(x',y')\nonumber\\
&{}={}&\int\int k_\X(x.x')k_\Y(y,y')\,dv_z(x,y)\,dv_{z'}(x',y')=:h(z,z'),\nonumber
\end{eqnarray}
using which in \eqref{Eq:hs-norm} yields the result. If $k_\X$ and $k_\Y$ are distance-induced, then using the fact that $\int g(x,x',y,y')\,dv_z(x,y)\,dv_{z'}(x',y')=0$ when $g$ does not depend on one or more of its arguments---basically, the same argument that we carried out in the proof of Theorem~\ref{thm5_1}---we have 
$$h(z,z')=\int\int \rho_\X(x,x') \rho_\Y(y,y')\,dv_z(x,y)\,dv_{z'}(x',y'),$$
and the result follows.
\end{proof}
While $h(z,z')$ has a distance interpretation as shown in Theorem~\ref{pro:hsc}, $\Vert \Sigma_{Y\ddot{X}|Z}\Vert^2_{HS}$ does not have an elegant representation in terms of distances. Suppose $k_\Z$ is also distance-induced, i.e.,  $k_\Z(z,z')=\rho_\Z(z,\theta^{''})+\rho_\Z(x',\theta^{''})-\rho_\Z(z,z')$
for some $\theta^{''}\in\Z$. Then
\begin{eqnarray}
\Vert \Sigma_{Y\ddot{X}|Z}\Vert^2_{HS}&{}={}&\int\int h(z,z')k_\Z(z,z')\,dP_Z(z)\,dP_Z(z')\nonumber\\
&{}={}&\int\int \left[\rho_\Z(z,\theta^{''})+\rho_\Z(x',\theta^{''})-\rho_\Z(z,z')\right]h(z,z')\,dP_Z(z)\,dP_Z(z').\label{Eq:temp}
\end{eqnarray}
Unfortunately, \eqref{Eq:temp} cannot be related in a simple manner to gCdCov or HSCIC. However, some simplifications occur based on certain assumptions on $k_\Z$, as shown in the following corollaries. Under an appropriate choice of $k_\Z$, Corollary~\ref{pro:tv} shows HS\"{C}IC to be asymptotically equivalent to the weighted average of HSCIC (equivalently, the weighted average of gCdCov) defined in \eqref{Eq:single} while Corollary~\ref{cor:2} shows the asymptotic equivalence between HS\"{C}IC and CdCov. 
\begin{corollary}\label{pro:tv}
Suppose the assumptions of Theorem~\ref{pro:hsc} hold and $P_Z$ has a density $p_Z$ w.r.t.~the Lebesgue measure on $\mathbb{R}^d$ such that $h(z,\cdot)p_Z$ is uniformly continuous and bounded for all $z\in\mathbb{R}^d$. For $t>0$, let $$k_\Z(z,z')=\frac{1}{t^d}\psi\left(\frac{z-z'}{t}\right),\,z,z'\in\mathbb{R}^d,$$ where $\psi\in L^1(\mathbb{R}^d)$ is a bounded continuous positive definite function with $\int_{\mathbb{R}^d}\psi(z)\,dz=1$. Then
\begin{eqnarray}\lim_{t\rightarrow 0}\Vert \Sigma_{Y\ddot{X}|Z}\Vert^2_{HS}&{}={}&
\int \Vert \dot{\Sigma}_{YX|Z=z}\Vert^2_{HS}\,p^2_Z(z)\,dz=\Cal{D}_{\Lambda}(P_{XY|Z},P_{X|Z}P_{Y|Z})\nonumber
\end{eqnarray}
with $d\Lambda=p^2_Z\,dz$, where $\Cal{D}_\Lambda$ is defined in \eqref{Eq:single}.
\end{corollary}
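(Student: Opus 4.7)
The plan is to start from the expression obtained in Theorem~\ref{pro:hsc}, namely
$$\|\Sigma_{Y\ddot{X}|Z}\|^2_{HS} = \int\int k_\Z(z,z')\, h(z,z')\, dP_Z(z)\, dP_Z(z'),$$
substitute $dP_Z(z)=p_Z(z)\,dz$, plug in the scaled kernel $k_\Z(z,z')=t^{-d}\psi((z-z')/t)$, and recognize this as a classical mollifier/approximation-to-the-identity argument. Specifically, I would change variables $u=(z-z')/t$ (so $z'=z-tu$, $dz' = t^d\,du$) to rewrite the integral as
$$\|\Sigma_{Y\ddot{X}|Z}\|^2_{HS}=\int_{\mathbb{R}^d}\int_{\mathbb{R}^d}\psi(u)\,h(z,z-tu)\,p_Z(z-tu)\,p_Z(z)\,du\,dz.$$

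Next, I would take the limit $t\to 0$ inside the double integral. For each fixed $z$ and $u$, the uniform continuity of $h(z,\cdot)p_Z$ gives $h(z,z-tu)p_Z(z-tu)\to h(z,z)p_Z(z)$. To justify interchanging the limit with the integrals, I would invoke the dominated convergence theorem, using the fact that $h(z,\cdot)p_Z$ is bounded (say by $M_z$) and that $\psi$ is a nonnegative (positive definite and integrable) function with $\int\psi\,du=1$; the integrand is then dominated by $M_z\psi(u)p_Z(z)$, which is integrable on $\mathbb{R}^d\times\mathbb{R}^d$. I may need a uniform-in-$z$ bound on $h(z,\cdot)p_Z$ by an integrable envelope against $p_Z(z)$; the stated uniform-continuity-and-boundedness hypothesis, combined with $p_Z\in L^1$, is exactly what makes this step go through.

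Once the limit is passed inside, the $u$-integration collapses to $\int\psi(u)\,du=1$, leaving
$$\lim_{t\to 0}\|\Sigma_{Y\ddot{X}|Z}\|^2_{HS}=\int_{\mathbb{R}^d} h(z,z)\,p_Z^2(z)\,dz.$$
Finally, I would identify $h(z,z)$ with $\|\dot{\Sigma}_{YX|Z=z}\|^2_{HS}$: from the proof of Theorem~\ref{pro:hsc}, $h(z,z')=\langle\dot{\Sigma}_{YX|Z=z},\dot{\Sigma}_{YX|Z=z'}\rangle_{HS}$, so setting $z'=z$ yields the squared Hilbert--Schmidt norm. Together with the definition \eqref{Eq:single} of $\Cal{D}_\Lambda$ with $d\Lambda = p_Z^2\,dz$, this closes the corollary.

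The step I expect to be the main technical obstacle is the dominated-convergence justification: one must ensure that the envelope $M_z\,\psi(u)\,p_Z(z)$ really is integrable on $\mathbb{R}^d\times\mathbb{R}^d$, i.e., that the bound on $h(z,\cdot)p_Z$ can be taken uniformly in $z$ (or at least that $M_z\,p_Z(z)$ is $L^1$), which is what the ``uniformly continuous and bounded for all $z$'' hypothesis is designed to give. Everything else---the change of variables, the pointwise limit from uniform continuity, and the final identification of $h(z,z)$---is routine once this integrability is in hand.
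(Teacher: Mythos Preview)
Your proposal is correct and follows essentially the same approach as the paper: start from \eqref{Eq:maineq}, substitute the density and the scaled kernel, recognize the inner integral as an approximate identity acting on $h(z,\cdot)p_Z$, pass to the limit via dominated convergence, and identify $h(z,z)=\Vert\dot{\Sigma}_{YX|Z=z}\Vert^2_{HS}$. The only cosmetic difference is that the paper writes the inner integral as a convolution $(\psi_t*(h(z,\cdot)p_Z))(z)$ and invokes \citet[Theorem~8.14]{Folland-99} for the pointwise limit, whereas you carry out the change of variables $u=(z-z')/t$ by hand---these are the same computation. One small slip: positive definiteness of $\psi$ does not imply $\psi\ge 0$, so your dominating function should be $M_z\,|\psi(u)|\,p_Z(z)$; this is harmless since $\psi\in L^1(\mathbb{R}^d)$.
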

\begin{proof}
Define $\psi_t(z):=t^{-d}\psi\left(\frac{z}{t}\right)$.
From \eqref{Eq:maineq}, it follows that
\begin{eqnarray}
\Vert \Sigma_{Y\ddot{X}|Z}\Vert^2_{HS}&{}={}&\int\int \psi_t(z-z')h(z,z')p_Z(z)p_Z(z')\,dz\,dz'\nonumber\\
&{}={}& \int p_Z(z)\left(\int \psi_t(z-z')h(z,z')p_Z(z')\,dz'\right)\,dz=\int p_Z(z)(\psi_t*(h(z,\cdot)p_Z)(z)\,dz,\nonumber
\end{eqnarray}
where $*$ denotes convolution. Taking the limit on both sides as $t\rightarrow 0$ and applying dominated convergence theorem, we obtain
\begin{eqnarray}
\lim_{t\rightarrow 0}\Vert \Sigma_{Y\ddot{X}|Z}\Vert^2_{HS}&{}={}&\lim_{t\rightarrow 0}\int p_Z(z)(\psi_t*(h(z,\cdot)p_Z)(z)\,dz
=\int p_Z(z)\lim_{t\rightarrow 0}(\psi_t*(h(z,\cdot)p_Z)(z)\,dz.\nonumber\end{eqnarray}
The result follows from \citet[Theorem 8.14]{Folland-99} which yields $\lim_{t\rightarrow 0}(\psi_t*(h(z,\cdot)p_Z)(z)=h(z,z)p_Z(z)$ for all $z\in\mathbb{R}^d$ and by noting that $h(Z,Z)=\Vert \dot{\Sigma}_{YX|Z}\Vert^2_{HS}$.
\end{proof}
\begin{corollary}\label{cor:2}
Suppose the assumptions of Theorem~\ref{pro:hsc} hold with $\rho_\X(x,x')=\Vert x-x'\Vert,\,x,x'\in\mathbb{R}^p$ and $\rho_\Y(y,y')=\Vert y-y'\Vert,\,y,y'\in\mathbb{R}^q$. Let $k(z,z')=\eta(z)\eta(z'),\,z,z'\in\mathbb{R}^d$ for some real-valued function $\eta$ on $\mathbb{R}^d$ and
\begin{equation}
\int \left|\eta(z)\right| \left\Vert\phi_{XY|Z=z} - \phi_{X|Z=z} \phi_{Y|Z=z}\right\Vert_{L^2(w)}\,dP_Z(z)<\infty.\label{Eq:cond-dc}    
\end{equation}
Then
\begin{eqnarray}\Vert \Sigma_{Y\ddot{X}|Z}\Vert^2_{HS}=\left\Vert \int \eta(z) \left(\phi_{XY|Z=z} - \phi_{X|Z=z} \phi_{Y|Z=z}\right)\,dP_Z(z)\right\Vert^2_{L^2(w)},\label{Eq:cor4-1}
\end{eqnarray}
where $w(t,s)=\frac{1}{c_pc_q}\Vert t\Vert^{-p-1}\Vert s\Vert^{-q-1},\,t\in\mathbb{R}^p,\,s\in\mathbb{R}^q$. In particular, for $t>0$ and some $a\in\mathbb{R}^d$, if $\eta(z)=\frac{1}{t^d}\theta\left(\frac{a-z}{t}\right),\,z\in\bb{R}^d$ where $\theta$ is a bounded continuous function with $\int \theta(z)\,dz=1$ and $P_Z$ has a bounded uniformly continuous density $p_Z$~on $\bb{R}^d$ such that \begin{equation}\int \sup_{z}\left|\phi_{XY|Z=z}(t,s) - \phi_{X|Z=z}(t) \phi_{Y|Z=z}(s)\right|^2\,dw(t,s)<\infty,\label{Eq:cond-dc-1}\end{equation} then \begin{equation}\lim_{t\rightarrow 0}\Vert \Sigma_{Y\ddot{X}|Z}\Vert^2_{HS}= p^2_Z(a)\V^2(X,Y|Z=a).\label{Eq:final1}\end{equation}
\end{corollary}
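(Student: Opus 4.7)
The plan is to reduce both parts to manipulations of the classical Fourier representation of the Euclidean norm, which converts distance expressions into $L^2(w)$-integrals involving the characteristic-function difference $\Delta_z(t,s) := \phi_{XY|Z=z}(t,s) - \phi_{X|Z=z}(t)\phi_{Y|Z=z}(s)$.

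For \eqref{Eq:cor4-1}, I would start from the expression for $h(z,z')$ given in Theorem~\ref{pro:hsc} under Euclidean $\rho_\X,\rho_\Y$, and apply the identity $\|x-x'\| = c_p^{-1}\int (1-\cos\langle t,x-x'\rangle)\|t\|^{-p-1}\,dt$ together with its counterpart for $\|y-y'\|$. Expanding the product $(1-\cos)(1-\cos)$ and using Fubini together with the zero-marginal property of $v_z$---which annihilates every term in the expansion that fails to depend on both $x$ and $y$---yields the polarized form of the standard distance-covariance identity
\begin{equation*}
h(z,z') = \int \Delta_z(t,s)\,\overline{\Delta_{z'}(t,s)}\,w(t,s)\,dt\,ds.
\end{equation*}
Substituting $k_\Z(z,z') = \eta(z)\eta(z')$ into \eqref{Eq:maineq} and interchanging the order of integration via Fubini (whose hypothesis is precisely \eqref{Eq:cond-dc}, since $\|\Delta_z\|_{L^2(w)} = \V(X,Y|Z=z)$), I would group the $\eta(z)\Delta_z$ factor on one side and its conjugate on the other to obtain
\begin{equation*}
\|\Sigma_{Y\ddot{X}|Z}\|^2_{HS} = \int \left|\int \eta(z)\Delta_z(t,s)\,dP_Z(z)\right|^2 w(t,s)\,dt\,ds,
\end{equation*}
which is \eqref{Eq:cor4-1}.

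For \eqref{Eq:final1}, plugging $\eta(z) = t^{-d}\theta((a-z)/t)$ into \eqref{Eq:cor4-1} and changing variables $u = (a-z)/t$, the inner integral becomes
\begin{equation*}
F_t(t',s') := \int \theta(u)\,\Delta_{a-tu}(t',s')\,p_Z(a-tu)\,du.
\end{equation*}
For each fixed $(t',s')$, continuity of $p_Z$ at $a$ (from its uniform continuity), pointwise continuity of $z\mapsto \Delta_z(t',s')$ at $z=a$, boundedness of $\theta$ with $\theta\in L^1$, and the trivial bound $|\Delta_z|\le 2$ together allow a bounded-convergence argument on the $u$-integral to give $F_t(t',s') \to p_Z(a)\Delta_a(t',s')$. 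To promote this pointwise convergence to $L^2(w)$-convergence, I would invoke the majorant $|F_t(t',s')|^2 \le \|\theta\|_1^2\|p_Z\|_\infty^2\sup_z|\Delta_z(t',s')|^2$, which is $w$-integrable by \eqref{Eq:cond-dc-1}, and apply dominated convergence to conclude $\|F_t\|^2_{L^2(w)}\to p_Z^2(a)\|\Delta_a\|^2_{L^2(w)} = p_Z^2(a)\V^2(X,Y|Z=a)$.

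The main obstacle is the pointwise continuity of $z\mapsto\Delta_z(t',s')$ at $z=a$, which is not among the stated hypotheses but is needed for the inner limit; in practice this follows from weak continuity of $z \mapsto P_{XY|Z=z}$, which in turn holds under mild regularity of the joint density of $(X,Y,Z)$. A secondary technical point is the double application of Fubini in the proof of \eqref{Eq:cor4-1}: the inner interchange reducing $h(z,z')$ to a $w$-integral is controlled by the $L^2(w)$ norms $\|\Delta_z\|_{L^2(w)}$, while the outer interchange with $dP_Z\,dP_Z$ is precisely what \eqref{Eq:cond-dc} is designed to absorb.
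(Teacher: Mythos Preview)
Your proof is correct and follows essentially the same route as the paper: both establish the polarized identity $h(z,z')=\langle \Delta_z,\Delta_{z'}\rangle_{L^2(w)}$ via the Sz\'ekely--Rizzo integral representation of $\|\cdot\|$ and the zero-marginal property of $v_z$, then obtain \eqref{Eq:cor4-1} by Fubini under \eqref{Eq:cond-dc}; for \eqref{Eq:final1} the paper packages your change-of-variables and inner bounded-convergence step as an appeal to the approximate-identity theorem \citep[Theorem~8.14]{Folland-99}, and then applies dominated convergence under \eqref{Eq:cond-dc-1} exactly as you do. The continuity-in-$z$ issue you flag is genuine but is equally implicit in the paper's invocation of Folland's result (which requires continuity of the convolved function at $a$), so you are not missing anything relative to the paper's argument.
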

\begin{proof}
In the following, we show that 
\begin{equation}h(z,z')=\left\langle\phi_{XY|Z=z} - \phi_{X|Z=z} \phi_{Y|Z=z}, \phi_{XY|Z=z'} - \phi_{X|Z=z'} \phi_{Y|Z=z'} \right\rangle_{L^2(w)}\label{Eq:tempo}\end{equation} and therefore \eqref{Eq:cor4-1} follows by using \eqref{Eq:tempo} in \eqref{Eq:maineq} with $k(z,z')=\eta(z)\eta(z')$ and applying dominated convergence theorem through \eqref{Eq:cond-dc}. 
We now prove \eqref{Eq:tempo}. Consider
\begin{eqnarray}
&&\left\langle \phi_{XY|Z=z} - \phi_{X|Z=z} \phi_{Y|Z=z}, \phi_{XY|Z=z'} - \phi_{X|Z=z'} \phi_{Y|Z=z'} \right\rangle_{L^2(w)}\nonumber\\
&{}={}& \int\int w(t,s)\left[\phi_{XY|Z=z}(t,s) - \phi_{X|Z=z}(t) \phi_{Y|Z=z}(s)\right]\left[\overline{\phi_{XY|Z=z'}(t,s) - \phi_{X|Z=z'}(t) \phi_{Y|Z=z'}(s)}\right]\,dt\,ds\nonumber\\
&{}={}&  \int \int w(t,s) \Lambda(t,s)\, dt\,ds,\label{Eq:ip}
\end{eqnarray}
where
\begin{eqnarray}
\Lambda(t,s) &{}={}& \left[\phi_{XY|Z=z}(t,s) - \phi_{X|Z=z}(t) \phi_{Y|Z=z}(s)][\overline{\phi_{XY|Z=z'}(t,s) - \phi_{X|Z=z'}(t) \phi_{Y|Z=z'}(s)}\right]\nonumber\\
&{}={}& \left[E_{XY|Z=z}e^{i(\langle t, x \rangle + \langle s, y \rangle)}- E_{X|Z=z}e^{i\langle t,x \rangle}E_{Y|Z=z}e^{i\langle s,y\rangle }\right]\nonumber\\
&{}{}&\qquad\qquad \cdot \left[\overline{E_{XY|Z=z}e^{i(\langle t, x \rangle + \langle s, y \rangle)}- E_{X|Z=z}e^{i\langle t,x \rangle}E_{Y|Z=z}e^{i\langle s,y\rangle}}\right]\nonumber\\
&{}={}& E_{XY|Z=z}E_{XY|Z=z'} e^{i(\langle t, x-x' \rangle + \langle s, y-y' \rangle)} - E_{XY|Z=z}E_{X|Z=z'}E_{Y|Z=z'}e^{i(\langle t, x-x' \rangle + \langle s, y-y' \rangle) }\nonumber\\
&{}{}&\qquad\qquad- E_{X|Z=z}E_{Y|Z=z}E_{XY|Z=z'}e^{i(\langle t, x-x' \rangle + \langle s, y-y' \rangle) } \nonumber\\
&{}{}&\qquad\qquad\qquad\qquad+ E_{X|Z=z}E_{Y|Z=z}E_{X|Z=z'}E_{Y|Z=z'}e^{i(\langle t, x-x' \rangle + \langle s, y-y' \rangle) }\nonumber\\
&{}={}&\int\int e^{i(\langle t, x-x' \rangle + \langle s, y-y' \rangle) }\,dv_z(x,y)\,dv_{z'}(x',y').\label{Eq:lambda}
\end{eqnarray}
Using \eqref{Eq:lambda} in \eqref{Eq:ip}, we obtain
\begin{equation}\int\int w(t,s)\Lambda(t,s)\,dt\,ds=\int\int\int\int \cos\langle t, x-x' \rangle \cos\langle s, y-y' \rangle w(t,s)\,dv_z(x,y)\,dv_{z'}(x',y')\,dt\,ds \label{Eq:tt}\end{equation}
by noting that $\sin\langle t, x-x' \rangle$ and $\sin\langle s, y-y' \rangle$ are odd functions w.r.t.~$t$ and $s$ respectively.
Since $\cos\langle t, x-x' \rangle \cos\langle s, y-y' \rangle  =  1-(1-\cos \langle t, x-x' \rangle) - (1- \cos \langle s, y-y' \rangle ) + (1-\cos \langle t, x-x' \rangle)(1- \cos \langle s, y-y' \rangle )$ and $$\int f(x,x',y,y')\, dv_z(x,y)\,dv_{z'}(x',y')=0$$ for $f(x,x',y,y')=1$, $f(x,x',y,y')=1-\cos\langle t,x-x'\rangle$ and $f(x,x',y,y')=1-\cos\langle s,y-y'\rangle$, \eqref{Eq:tt} reduces to
\begin{eqnarray}\int\int w(t,s)\Lambda(t,s)\,dt\,ds&{}={}&\int\int\int\int \frac{1-\cos \langle t, x-x' \rangle}{c_p\Vert t\Vert^{p+1}}\cdot\frac{1- \cos \langle s, y-y' \rangle}{c_q\Vert s\Vert^{q+1}}\,dv_z(x,y)\,dv_{z'}(x',y')\,dt\,ds\nonumber\\
&{}={}&\int\int \Vert x-x'\Vert \Vert y-y'\Vert\,dv_z(x,y)\,dv_{z'}(x',y')=h(z,z'),\nonumber
\end{eqnarray}
where the last equality follows from Lemma 1 of \cite{szekely2007measuring} through $\int \frac{1-\cos \langle t, x\rangle }{c_p\Vert t\Vert ^{p+1}}\,dt =\Vert x\Vert$, thereby proving the result in \eqref{Eq:cor4-1}. By defining $\theta_t(z)=t^{-d}\theta\left(\frac{z}{t}\right)$, we have
$$\int \eta(z) \left(\phi_{XY|Z=z} - \phi_{X|Z=z} \phi_{Y|Z=z}\right)\,dP_Z(z)=\theta_t*\left(\left(\phi_{XY|Z} - \phi_{X|Z} \phi_{Y|Z}\right)p_Z\right)(a),$$
which by \citep[Theorem 8.14]{Folland-99} converges to $\left(\phi_{XY|Z=a} - \phi_{X|Z=a} \phi_{Y|Z=a}\right)p_Z(a)$ as $t\rightarrow 0$. Using these in \eqref{Eq:cor4-1} along with dominated convergence theorem combined with \eqref{Eq:cond-dc-1} yields \eqref{Eq:final1}.
\end{proof}
\begin{remark} Informally, the result of Corollary~\ref{pro:tv} can be obtained by choosing $k_\Z(z,z')=\delta(z-z'),\,z,z'\in\mathbb{R}^d$, where $\delta(\cdot)$ is the Dirac distribution. Since such a choice does not correspond to a valid reproducing kernel---Dirac distribution is not a function but a \emph{distribution} that does not belong to an RKHS---, the rigorous argument involves considering a family of kernels indexed by bandwidth $t$ which in the limiting case of $t\rightarrow 0$ achieves the behavior of the Dirac distribution. Similar argument applies to Corollary~\ref{cor:2} as well.

\end{remark}
The discussion so far shows that HS\"{C}IC is a kernel measure of conditional independence that does not possess a clean expression for its distance counterpart except in very special scenarios. However, in the machine learning literature, HS\"{C}IC is more widely used as a measure of conditional independence than $\mathbb{E}_Z\Vert \dot{\Sigma}_{YX|Z}\Vert^2_{HS}=\mathcal{D}_{p_Z}(P_{XY|Z},P_{X|Z}P_{Y|Z})]$ in conditional independence tests. This is because, from the point of view of estimating these measures, HS\"{C}IC enjoys a simple and computationally efficient estimator, as detailed in the following section.
\section{Estimation}\label{sec:estimation}
In this section, we investigate the question of
estimating $\mathcal{D}_{p_Z}(P_{XY|Z},P_{X|Z}P_{Y|Z})=\mathbb{E}_Z[\Vert\dot{\Sigma}_{YX|Z}\Vert^2_{HS}]$ and $\Vert\Sigma_{Y\ddot{X}|Z}\Vert^2_{HS}$, which is of importance as these estimators are used as test statistics in testing for conditional independence. 
In the following, we first provide the basic idea to construct these estimators. To this end, consider the problem of estimating $$\gamma_z:=\int\int a(x,y)\,dP_{XY|Z=z}(x,y)$$ based on samples $(X_i,Y_i,Z_i)^n_{i=1}\stackrel{i.i.d.}{\sim}P_{XYZ}$. Suppose $P_Z$ has a density $p_Z$ w.r.t.~a dominating measure $\mu$ on $(\rho_\Z,\Z)$ where $\rho_\Z$ is the metric on $\Z$. Define $\theta_i(z):=K(\rho_\Z(z,Z_i))$ and $\theta(z):=\sum^n_{i=1}\theta_i(z)$ where $K:\mathbb{R}^+\rightarrow \mathbb{R}^+$ with $\int K(x)\,dx=1$. Then it is clear that conditioned on $(Z_i)^n_{i=1}=z$, $$\frac{1}{\theta(z)}\sum^n_{i=1}a(X_i,Y_i)\theta_i(z)$$ is an unbiased estimator of $\gamma_z$ for all $z\in\Z$. Using this idea, the following estimators for $\mathbb{E}_Z[\Vert\dot{\Sigma}_{YX|Z}\Vert^2_{HS}]$ and $\Vert\Sigma_{Y\ddot{X}|Z}\Vert^2_{HS}$ can be obtained. To this end, consider
\begin{eqnarray}
&{}{}& \mathcal{D}_{p_Z}(P_{XY|Z},P_{X|Z}P_{Y|Z})]=\mathbb{E}_Z[\Vert\dot{\Sigma}_{YX|Z}\Vert^2_{HS}]\nonumber\\
&{}={}& \int \left\Vert \int k_\Y (\cdot,y)\otimes k_\X(\cdot,x)\,d\left(P_{XY|Z=z}-P_{X|Z=z}P_{Y|Z=z}\right)(x,y)\right\Vert^2_{HS}\,dP_Z(z)\nonumber\\
&{}\approx {}& \int \left\Vert \sum^n_{i=1}k_\Y(\cdot,Y_i)\otimes k_\X(\cdot,X_i)\frac{\theta_i(z)}{\theta(z)}-\left(\sum^n_{i=1}k_\Y(\cdot,Y_i)\frac{\theta_i(z)}{\theta(z)}\right)\otimes \left(\sum^n_{i=1}k_\X(\cdot,X_i)\frac{\theta_i(z)}{\theta(z)}\right)\right\Vert^2_{HS}\,dP_Z(z)\nonumber\\
&{}\approx&{}\frac{1}{n}\sum^n_{j=1}\left\Vert \sum^n_{i=1}k_\Y(\cdot,Y_i)\otimes k_\X(\cdot,X_i)\frac{\theta_i(Z_j)}{\theta(Z_j)}-\left(\sum^n_{i=1}k_\Y(\cdot,Y_i)\frac{\theta_i(Z_j)}{\theta(Z_j)}\right)\otimes \left(\sum^n_{l=1}k_\X(\cdot,X_l)\frac{\theta_l(Z_j)}{\theta(Z_j)}\right)\right\Vert^2_{HS}\nonumber\\
&{}={}&\frac{1}{n}\sum^n_{j=1}\left[\sum^n_{a,b=1}k_\X(X_a,X_b)k_\Y(Y_a,Y_b)\frac{\theta_a(Z_j)\theta_b(Z_j)}{\theta^2(Z_j)}\right.\nonumber\\
&{}{}&\qquad\left.-2\sum^n_{a,b,c=1}k_\X(X_a,X_c) k_\Y(Y_a,Y_b)\frac{\theta_a(Z_j)\theta_b(Z_j)\theta_c(Z_j)}{\theta^3(Z_j)}\right]\nonumber\\
&{}{}&\qquad\qquad+\frac{1}{n}\sum^n_{j=1}\sum^n_{a,b,c,d=1}k_\X(X_a,X_c) k_\Y(Y_b,Y_d)\frac{\theta_a(Z_j)\theta_b(Z_j)\theta_c(Z_j)\theta_d(Z_j)}{\theta^4(Z_j)},\label{Eq:estim-ours}
\end{eqnarray}
where the r.h.s.~of \eqref{Eq:estim-ours} is an estimator ($V$-statistics version) of $\mathbb{E}_Z[\Vert\dot{\Sigma}_{YX|Z}\Vert^2_{HS}]$. The symbol $\approx$ indicates that a given line is an approximation to the previous line and is obtained using the aforementioned idea to estimate $\gamma_z$. A $U$-statistic version of \eqref{Eq:estim-ours} can be similarly derived. It has to be noted that both these versions have a computational complexity of $O(n^3)$ and the consistency of these estimators can be established using the standard convergence results in $U$ and $V$-statistics \citep{serfling2009approximation}. \citet{wang2015conditional} obtained an estimator similar to \eqref{Eq:estim-ours} for CdCov and showed it to be consistent (see Sections 3.2 and 3.3 of \citealp{wang2015conditional}).

Similarly, an estimator of $\Vert\Sigma_{Y\ddot{X}|Z}\Vert^2_{HS}$ can be obtained as:
\begin{eqnarray}
&{}{}&\Vert\Sigma_{Y\ddot{X}|Z}\Vert^2_{HS}\nonumber\\
&{}={}&\left\Vert \int k_\Y(\cdot,y)\otimes k_\X(\cdot,x)\otimes k_\Z(\cdot,z)\,d\left(P_{XY|Z=z}-P_{X|Z=z}P_{Y|Z=z}\right)(x,y)\,dP_Z(z)\right\Vert^2_{HS}\nonumber\\
&{}\approx{}&\left\Vert \frac{1}{n}\sum^n_{j=1}\left(\int k_\Y(\cdot,y)\otimes k_\X(\cdot,x)\, d\left(P_{XY|Z=Z_j}-P_{X|Z=Z_j}P_{Y|Z=Z_j}\right)(x,y)\right)\otimes k_\Z(\cdot,Z_j)\right\Vert^2_{HS}\nonumber
\end{eqnarray}
\begin{eqnarray}
&{}\approx{}& \left\Vert\frac{1}{n}\sum^n_{j=1}\left(\sum^n_{i=1}k_\Y(\cdot,Y_i)\otimes k_\X(\cdot,X_i)\frac{\theta_i(Z_j)}{\theta(Z_j)}-\left(\sum^n_{i=1}k_\Y(\cdot,Y_i)\frac{\theta_i(Z_j)}{\theta(Z_j)}\right)\otimes\left(\sum^n_{l=1}k_\X(\cdot,X_l)\frac{\theta_l(Z_j)}{\theta(Z_j)}\right)\right)\right.\nonumber\\
&{}{}&\qquad\qquad\left.\otimes k_\Z(\cdot,Z_j) \right\Vert^2_{HS}\nonumber\\
&{}={}&\frac{1}{n^2}\sum^n_{a,b,c,d=1}k_\X(X_a,X_b)k_\Y(X_a,X_b)k_\Z(Z_a,Z_b)\frac{\theta_a(Z_b)\theta_b(Z_a)}{\theta(Z_a)\theta(Z_b)}\nonumber\\
&{}{}&\qquad -\frac{2}{n^2}\sum^n_{a,b,c,d,e=1}k_\X(X_b,X_d)k_\Y(Y_b,Y_e)k_\Z(Z_a,Z_c)\frac{\theta_b(Z_a)\theta_d(Z_c)\theta_e(Z_c)}{\theta(Z_a)\theta^2(Z_c)}\nonumber\\
&{}{}&\qquad\qquad +\frac{1}{n^2}\sum^n_{a,b,c,d,e,f=1}k_\X(X_b,X_e)k_\Y(Y_c,Y_f)k_\Z(Z_a,Z_d)\frac{\theta_b(Z_a)\theta_c(Z_a)\theta_e(Z_d)\theta_f(Z_d)}{\theta^2(Z_a)\theta^2(Z_d)},\label{Eq:v-hscic}
\end{eqnarray}
which is a $V$-statistic version with a computational complexity of $O(n^4)$, that is higher than that of the estimator in \eqref{Eq:estim-ours} for $\mathbb{E}_Z[\Vert\dot{\Sigma}_{YX|Z}\Vert^2_{HS}]$. However, under certain conditions (see Proposition~\ref{pro:equivalence}), \cite{fukumizu2004dimensionality}
obtained an alternate expression for $\Sigma_{Y\ddot{X}|Z}$, using which a computationally efficient estimator with a complexity of $O(n^3)$ can be obtained for HS\"{C}IC that does not require the estimation of $p_Z$ (see Proposition~\ref{pro:estim}).
\begin{proposition}(\citealp[Proposition 5]{fukumizu2004dimensionality})\label{pro:equivalence}
Suppose
$\mE_X[k^2_{\X}(X,X)]$, $\mE_Y[k_{\Y}(Y,Y)]$ and $\mE_Z[k^2_{\Z}(Z,Z)]$ are finite. If $h\!\cdot\!\mE_{X|Z}[f(X)|Z= \cdot]$ and $\mE_{Y|Z}[g(Y)|Z= \cdot]$ are elements of $\H_{\Z}$ for all $f \in \H_{\X}$, $g \in \H_{\Y}$ and $h\in\H_{\Z}$, then 
\begin{equation}
\Sigma_{Y\ddot{X}|Z} = \Sigma_{Y\ddot{X}} - \Sigma_{YZ} \tilde{\Sigma}_{ZZ}^{-1} \Sigma_{Z\ddot{X}},\label{Eq:equiv}
\end{equation}
where $\tilde{\Sigma}^{-1}_{ZZ}$ is the right inverse of $\Sigma_{ZZ}$ on $(\emph{ker}(\Sigma_{ZZ}))^\perp$.
\end{proposition}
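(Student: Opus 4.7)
The plan is to establish the operator identity by testing it against arbitrary $f\in \H_{\ddot X}$ and $g\in \H_\Y$ and matching the induced bilinear forms on both sides, since the bounded operators $\Sigma_{Y\ddot X|Z}$ and $\Sigma_{Y\ddot X}$ are uniquely determined by those forms. The integrability hypotheses on $k_\X$, $k_\Y$, $k_\Z$ guarantee that $\Sigma_{Y\ddot X}$, $\Sigma_{YZ}$, $\Sigma_{ZZ}$, $\Sigma_{Z\ddot X}$ are all well defined Hilbert--Schmidt (hence compact) operators, so $\tilde\Sigma_{ZZ}^{-1}$ is meaningful as a right inverse on $(\ker\Sigma_{ZZ})^\perp$ and the right-hand side of \eqref{Eq:equiv} makes sense.

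First I would expand both $\langle g,\Sigma_{Y\ddot X|Z}f\rangle_{\H_\Y}$ and $\langle g,\Sigma_{Y\ddot X}f\rangle_{\H_\Y}$ using the defining identities recalled in Section~\ref{subsec:hscic}. After the common term $\mE[f(\ddot X)g(Y)]$ cancels, their difference collapses to
\[
\mE[f(\ddot X)]\,\mE[g(Y)] \;-\; \mE_Z\bigl[\tilde f(Z)\,\tilde g(Z)\bigr] \;=\; -\,\mathrm{Cov}\bigl(\tilde f(Z),\tilde g(Z)\bigr),
\]
where $\tilde f(z):=\mE[f(\ddot X)\mid Z=z]$ and $\tilde g(z):=\mE[g(Y)\mid Z=z]$. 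The stated hypothesis is precisely what places $\tilde g$ in $\H_\Z$ and, by linearity applied to the pure tensors $h\otimes f \in \H_\Z\otimes \H_\X \simeq \H_{\ddot X}$, also places $\tilde f$ in $\H_\Z$.

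Next I would identify $\tilde f$ and $\tilde g$ with the outputs of $\tilde\Sigma_{ZZ}^{-1}$ acting on cross-covariance images. Testing against an arbitrary $h\in \H_\Z$,
\[
\langle h,\Sigma_{Z\ddot X}f\rangle_{\H_\Z} \;=\; \mathrm{Cov}\bigl(h(Z),f(\ddot X)\bigr) \;=\; \mathrm{Cov}\bigl(h(Z),\tilde f(Z)\bigr) \;=\; \langle h,\Sigma_{ZZ}\tilde f\rangle_{\H_\Z},
\]
which forces $\Sigma_{ZZ}\tilde f=\Sigma_{Z\ddot X}f$, hence $\tilde f=\tilde\Sigma_{ZZ}^{-1}\Sigma_{Z\ddot X}f$; analogously $\tilde g=\tilde\Sigma_{ZZ}^{-1}\Sigma_{ZY}g$. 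Substituting into $\mathrm{Cov}(\tilde f(Z),\tilde g(Z))=\langle \tilde g,\Sigma_{ZZ}\tilde f\rangle_{\H_\Z}$, using $\Sigma_{ZZ}\tilde\Sigma_{ZZ}^{-1}=I$ on $\mathrm{range}(\Sigma_{ZZ})$, together with self-adjointness of $\tilde\Sigma_{ZZ}^{-1}$ and $\Sigma_{YZ}=\Sigma_{ZY}^*$, I obtain
\[
\mathrm{Cov}\bigl(\tilde f(Z),\tilde g(Z)\bigr) \;=\; \bigl\langle g,\,\Sigma_{YZ}\tilde\Sigma_{ZZ}^{-1}\Sigma_{Z\ddot X}f\bigr\rangle_{\H_\Y}.
\]
Combining with the preceding display yields \eqref{Eq:equiv} for arbitrary $f, g$, which gives the operator identity.

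The main obstacle is a careful treatment of $\tilde\Sigma_{ZZ}^{-1}$: because $\Sigma_{ZZ}$ is typically not invertible, one must verify that $\Sigma_{Z\ddot X}f$ and $\Sigma_{ZY}g$ actually lie in $\mathrm{range}(\Sigma_{ZZ})$ (not merely its closure)---which is exactly the analytic content of the hypothesis that the conditional-expectation maps $f\mapsto \tilde f$ and $g\mapsto \tilde g$ send their domains into $\H_\Z$---and that any ambiguity in $\tilde f,\tilde g$ modulo $\ker\Sigma_{ZZ}$ is annihilated upon pairing, since $\Sigma_{ZY}g \in \overline{\mathrm{range}}(\Sigma_{ZZ})=(\ker\Sigma_{ZZ})^\perp$. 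A secondary technical point is upgrading the tensor-form condition $h(Z)\,\mE[f(X)\mid Z=\cdot]\in \H_\Z$ from pure tensors $h\otimes f$ to arbitrary $f\in\H_{\ddot X}=\H_\X\otimes\H_\Z$ by linearity, continuity, and a density argument.
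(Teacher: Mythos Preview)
The paper does not supply its own proof of this proposition; it simply cites \citet[Proposition~5]{fukumizu2004dimensionality}. Your argument is correct and is essentially the standard one: reduce to bilinear forms, use the tower property to obtain $\Sigma_{ZZ}\tilde f=\Sigma_{Z\ddot X}f$ and $\Sigma_{ZZ}\tilde g=\Sigma_{ZY}g$, and then unwind. Your caveats about range versus closure of $\Sigma_{ZZ}$ and about extending from pure tensors to all of $\H_{\ddot X}$ are exactly the points that need care, and you have identified them accurately.
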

The key observation in Proposition~\ref{pro:equivalence}
is that the conditional covariance operator can be expressed in terms of only covariance operators (this is reminiscent of the situation when $(X,Y,Z)$ are jointly normal so that the conditional covariance matrix can be represented in terms of the joint covariance matrices). Given this equivalent representation, an estimator of $\Sigma_{Y\ddot{X}|Z}$ can be constructed by plugging-in the empirical estimators of covariance operators in the r.h.s.~of \eqref{Eq:equiv}, yielding
\begin{equation}
\hat{\Sigma}_{Y\ddot{X}|Z}:=\hat{\Sigma}_{Y\ddot{X}}-\hat{\Sigma}_{YZ}(\hat{\Sigma}_{ZZ}+\lambda I)^{-1}\hat{\Sigma}_{Z\ddot{X}},\label{Eq:estim-cov}
\end{equation}
where $\lambda>0$ is the regularization parameter. Since $\hat{\Sigma}_{ZZ}$ is a finite rank infinite dimensional operator from $\H_\Z$ to $\H_\Z$, it is not invertible. Therefore an estimator of $\tilde{\Sigma}^{-1}_{ZZ}$ is obtained by regularizing $\hat{\Sigma}_{ZZ}$ by $\lambda I$ and inverting the result as shown in \eqref{Eq:estim-cov}.
For example, based on \eqref{Eq:cov-op}, an empirical estimator of $\Sigma_{YZ}$ can be written as 
$$\hat{\Sigma}_{YZ}=\frac{1}{n}\sum^n_{i=1}k_\Y(\cdot,Y_i)\otimes k_\Z(\cdot,Z_i)-\left(\frac{1}{n}\sum^n_{i=1}k_\Y(\cdot,Y_i)\right)\otimes \left(\frac{1}{n}\sum^n_{i=1}k_\Z(\cdot,Z_i)\right).$$ Similarly, the empirical estimators of $\hat{\Sigma}_{Y\ddot{X}}$, $\hat{\Sigma}_{ZZ}$ and $\hat{\Sigma}_{Z\ddot{X}}$ can be defined. An important point to note is that \eqref{Eq:estim-cov} does not require density estimation unlike in \eqref{Eq:v-hscic} but involves a nuisance parameter $\lambda$ as a counterpart to the bandwidth parameter in density estimation. The following result (Proposition~\ref{pro:estim}) provides a simple expression for $\Vert \hat{\Sigma}_{Y\ddot{X}|Z}\Vert^2_{HS}$ from which it is easy to see that $\Vert \Sigma_{Y\ddot{X}|Z}\Vert^2_{HS}$ can be estimated by an estimator that has a computational complexity of $O(n^3)$, thereby improving upon \eqref{Eq:v-hscic} and matching the computational complexity of $\mathbb{E}_Z[\Vert\dot{\Sigma}_{YX|Z}\Vert^2_{HS}]$.
\begin{proposition}\label{pro:estim}
Define $\bm{H}=\frac{1}{n}\left(I-\frac{1}{n}\bm{1}\bm{1}^\top\right)$, $\tilde{\bm{K}}_X=\bm{K}_X\bm{H}$, $\tilde{\bm{K}}_Y=\bm{K}_Y\bm{H}$, $\tilde{\bm{K}}_Z=\bm{K}_Z\bm{H}$, $\tilde{\bm{K}}_{\ddot{X}}=(\bm{K}_X\circ \bm{K}_Z)\bm{H}$ where $\circ$ denotes the Hadamard product, $[\bm{K}_X]_{ij}=k_\X(X_i,X_j)$, $[\bm{K}_Y]_{ij}=k_\Y(Y_i,Y_j)$ and $[\bm{K}_Z]_{ij}=k_\Z(Z_i,Z_j)$ for all $i,j=1,\ldots,n$. Then
\begin{equation}
\Vert\hat{\Sigma}_{Y\ddot{X}|Z}\Vert^2_{HS}=\emph{Tr}\left[\tilde{\bm{K}}_{\ddot{X}}\left(n \bm{H}-\left(\tilde{\bm{K}}_Z+\lambda I\right)^{-1}\tilde{\bm{K}}_Z\right)\tilde{\bm{K}}_Y\left(n \bm{H}-\left(\tilde{\bm{K}}_Z+\lambda I\right)^{-1}\tilde{\bm{K}}_Z\right)\right].\label{Eq:kenji}
\end{equation}
\end{proposition}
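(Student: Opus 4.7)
The plan is to represent each empirical covariance operator through a centered ``feature matrix,'' use the standard push-through identity to move $(\hat\Sigma_{ZZ}+\lambda I)^{-1}$ from the RKHS side to the Gram-matrix side, and then compute the Hilbert-Schmidt norm via the trace formula for operators built from feature maps. The concluding step will be a symmetry observation about a certain $n\times n$ matrix that accounts for the paper's expression not carrying a transpose.

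Concretely, for $A\in\{X,Y,Z,\ddot X\}$, let $\Phi_A:\mathbb{R}^n\to\H_{k_A}$ denote the synthesis map $\Phi_A c=\sum_{i=1}^n c_i\, k_A(\cdot,A_i)$, so that $\Phi_A^*\Phi_A=\bm K_A$, and write $\bm H_0:=n\bm H=I-n^{-1}\bm 1\bm 1^\top$ for the usual centering matrix, which satisfies $\bm H_0^2=\bm H_0$. A one-line calculation gives $\hat\Sigma_{AB}=n^{-1}\Phi_A\bm H_0\Phi_B^*$ for every pair $(A,B)$. The push-through identity
\begin{equation*}
\Phi_Z^*(\hat\Sigma_{ZZ}+\lambda I)^{-1}=(\tilde{\bm K}_Z+\lambda I)^{-1}\Phi_Z^*,
\end{equation*}
proved by checking $(\tilde{\bm K}_Z+\lambda I)\Phi_Z^*=\Phi_Z^*(\hat\Sigma_{ZZ}+\lambda I)$, together with $\bm K_Z\bm H_0=n\tilde{\bm K}_Z$, reduces $\hat\Sigma_{YZ}(\hat\Sigma_{ZZ}+\lambda I)^{-1}\hat\Sigma_{Z\ddot X}$ to $n^{-1}\Phi_Y\bm H_0\bm R\,\Phi_{\ddot X}^*$ with $\bm R:=(\tilde{\bm K}_Z+\lambda I)^{-1}\tilde{\bm K}_Z$. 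Subtracting from $\hat\Sigma_{Y\ddot X}=n^{-1}\Phi_Y\bm H_0\Phi_{\ddot X}^*$ and using $\bm H_0^2=\bm H_0$ then yields
\begin{equation*}
\hat\Sigma_{Y\ddot X|Z}=\tfrac{1}{n}\,\Phi_Y\bm M\Phi_{\ddot X}^*,\qquad \bm M:=\bm H_0(\bm H_0-\bm R).
\end{equation*}

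The Hilbert-Schmidt norm now follows from the elementary identity $\|\Phi_Y\bm M\Phi_{\ddot X}^*\|_{HS}^2=\text{Tr}(\bm K_{\ddot X}\bm M^\top\bm K_Y\bm M)$, which is a direct $T^*T$-trace computation combined with $\Phi_A^*\Phi_A=\bm K_A$. To match the claimed form I verify that $\bm M$ is symmetric, so that $\bm M^\top$ may be replaced by $\bm M$ inside the trace. Writing $M_0:=\bm H_0\tilde{\bm K}_Z=n^{-1}\bm H_0\bm K_Z\bm H_0$, which is manifestly symmetric, and using $A(A+\lambda I)^{-1}=(A+\lambda I)^{-1}A$, one obtains $\bm H_0\bm R=M_0(\tilde{\bm K}_Z+\lambda I)^{-1}$ and $\bm R^\top\bm H_0=(\tilde{\bm K}_Z^\top+\lambda I)^{-1}M_0$, so the desired equality $\bm H_0\bm R=\bm R^\top\bm H_0$ reduces to the commutation relation $M_0\tilde{\bm K}_Z=\tilde{\bm K}_Z^\top M_0$, both sides of which expand to $n^{-2}\bm H_0\bm K_Z\bm H_0\bm K_Z\bm H_0$. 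Finally, substituting $\bm H_0=n\bm H$ and $\tilde{\bm K}_A=\bm K_A\bm H$ into $n^{-2}\text{Tr}(\bm K_{\ddot X}\bm M\bm K_Y\bm M)$ produces exactly $\text{Tr}[\tilde{\bm K}_{\ddot X}(n\bm H-\bm R)\tilde{\bm K}_Y(n\bm H-\bm R)]$. The main obstacle in this plan is the symmetry check: the non-symmetry of $\tilde{\bm K}_Z$ at first sight appears to force a transpose on one side, and it is only the hidden commutation $M_0\tilde{\bm K}_Z=\tilde{\bm K}_Z^\top M_0$, itself a consequence of $\bm H_0^2=\bm H_0$, that makes the paper's tighter transpose-free expression correct.
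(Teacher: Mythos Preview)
Your proof is correct and follows essentially the same route as the paper: your synthesis maps $\Phi_A$ are exactly the paper's $S_A^*$, your push-through identity is the paper's identity $B(AB+\lambda I)^{-1}=(BA+\lambda I)^{-1}B$, and both proofs finish by writing $\hat\Sigma_{Y\ddot X|Z}$ as a feature-map sandwich and taking the Hilbert--Schmidt trace. The only presentational difference is that you justify the absence of a transpose via the explicit symmetry of $\bm M$ (through the commutation $M_0\tilde{\bm K}_Z=\tilde{\bm K}_Z^\top M_0$), whereas the paper obtains the same thing by writing out the adjoint and applying the push-through identity once more to $\bm K_Z(\bm H\bm K_Z+\lambda I)^{-1}$.
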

\begin{proof}
Define $$S_X:\H_\X\rightarrow\mathbb{R}^n,\,\,\,\, f\mapsto (f(X_1),\ldots,f(X_n))^\top.$$ It is easy to verify that $$S^*_X:\mathbb{R}^n\rightarrow \H_\X,\,\,\,\, \bm{\alpha}\mapsto \sum^n_{i=1}\alpha_i k_\X(\cdot,X_i),$$ where $\mathbf{\alpha}=(\alpha_1,\ldots,\alpha_n)^\top$ since for all $f\in\H_\X$ and $\bm{\alpha}\in\mathbb{R}^n$,
$$\langle S^*_X\bm{\alpha},f\rangle_2=\langle \bm{\alpha},S_X f\rangle_{\H_\X}=\sum^n_{i=1}\alpha_i f(X_i)=\sum^n_{i=1}\alpha_i\langle k_\X(\cdot,X_i),f\rangle_{\H_\X}=\left\langle \sum^n_{i=1}\alpha_i k_\X(\cdot,X_i),f\right\rangle_{\H_\X}.$$
Similarly define $$S_Y:\H_\Y\rightarrow\mathbb{R}^n,\,\,\,\, g\mapsto (g(Y_1),\ldots,g(Y_n))^\top$$ and $$S_Z:\H_\Z\rightarrow\mathbb{R}^n,\,\,\,\, h\mapsto (h(Y_1),\ldots,h(Y_n))^\top.$$ 
It is easy to verify that $$\hat{\Sigma}_{ZZ}=\frac{1}{n}S^*_Z\left(I-\frac{1}{n}\bm{1}\bm{1}^\top\right)S_Z=S^*_Z\bm{H}S_Z$$ since for any $h\in\H_\Z$,
$$\hat{\Sigma}_{ZZ}h=\frac{1}{n}\sum^n_{i=1}k_\Z(\cdot,Z_i)h(Z_i)-\left(\frac{1}{n}\sum^n_{i=1}k_\Z(\cdot,Z_i)\right)\left(\frac{1}{n}\sum^n_{i=1}h(Z_i)\right)=\frac{1}{n}S^*_ZS_Zh-\frac{1}{n^2}S^*_Z\bm{1}\bm{1}^\top S_Zh.$$
Similarly, it can be shown that 
$$\hat{\Sigma}_{YZ}=S^*_Y\bm{H}S_{Z},\,\,\,\hat{\Sigma}_{Y\ddot{X}}=S^*_Y\bm{H}S_{\ddot{X}}\,\,\,\text{and}\,\,\,\hat{\Sigma}_{Z\ddot{X}}=S^*_Z\bm{H}S_{\ddot{X}},$$
where $S_{\ddot{X}}:\H_\X\otimes \H_\Z\rightarrow\mathbb{R}^n$, $f\otimes h\mapsto (f(X_1)h(Z_1),\ldots,f(X_n)h(Z_n))^\top$. Therefore,
\begin{eqnarray}
\hat{\Sigma}_{Y\ddot{X}|Z}&{}={}&\hat{\Sigma}_{Y\ddot{X}}-\hat{\Sigma}_{YZ}(\hat{\Sigma}_{ZZ}+\lambda I)^{-1}\hat{\Sigma}_{Z\ddot{X}}\nonumber\\
&{}={}&S^*_Y\bm{H}S_{\ddot{X}}-S^*_Y\bm{H}S_Z\left(S^*_Z\bm{H}S_Z+\lambda I\right)^{-1}S^*_Z\bm{H}S_{\ddot{X}}\nonumber\\
&{}\stackrel{(*)}{=}{}&S^*_Y\bm{H}S_{\ddot{X}}-S^*_Y\bm{H}\left(S_ZS^*_Z\bm{H}+\lambda \bm{I}\right)^{-1}S_ZS^*_Z\bm{H}S_{\ddot{X}}\nonumber\\
&{}\stackrel{(**)}{=}{}&S^*_Y\bm{H}S_{\ddot{X}}-S^*_Y\bm{H}\left(\bm{K}_Z\bm{H}+\lambda \bm{I}\right)^{-1}\bm{K}_Z\bm{H}S_{\ddot{X}}\nonumber\\
&{}\stackrel{(\dagger)}{=}{}&S^*_Y\bm{H}\left(n \bm{I}-\left(\bm{K}_Z\bm{H}+\lambda \bm{I}\right)^{-1}\bm{K}_Z\right)\bm{H}S_{\ddot{X}},\nonumber
\end{eqnarray}
where in $(*)$ we used the identity $B(AB+\lambda I)^{-1}=(BA+\lambda I)^{-1}B$ which is easy to verify by noting that $(BA+\lambda I)B(AB+\lambda I)^{-1}(AB+\lambda I)=(BA+\lambda I)(BA+\lambda I)^{-1}B(AB+\lambda I)$. Since it is clear that for any $\bm{\alpha}\in\mathbb{R}^n$, $S_ZS^*_Z\bm{\alpha}=\sum^n_{i=1}\alpha_iS_Z k_\Z(\cdot,Z_i)=\bm{K}_Z\bm{\alpha}$, we used $\bm{K}_Z=S_ZS^*_Z$ in $(**)$. In $(\dagger)$, we used the fact that $\bm{H}^2=\bm{H}/n$. Therefore, it follows that
\begin{eqnarray}
\Vert \hat{\Sigma}_{Y\ddot{X}|Z}\Vert^2_{HS}&{}={}&\left\langle S^*_Y\bm{H}\left(n \bm{I}-\left(\bm{K}_Z\bm{H}+\lambda \bm{I}\right)^{-1}\bm{K}_Z\right)\bm{H}S_{\ddot{X}},S^*_Y\bm{H}\left(n \bm{I}-\left(\bm{K}_Z\bm{H}+\lambda \bm{I}\right)^{-1}\bm{K}_Z\right)\bm{H}S_{\ddot{X}}\right\rangle_{HS}\nonumber\\
&{}={}&\text{Tr}\left[S^*_{\ddot{X}}\bm{H}\left(n \bm{I}-\bm{K}_Z\left(\bm{H}\bm{K}_Z+\lambda \bm{I}\right)^{-1}\right)\bm{H}S_YS^*_Y\bm{H}\left(n \bm{I}-\left(\bm{K}_Z\bm{H}+\lambda \bm{I}\right)^{-1}\bm{K}_Z\right)\bm{H}S_{\ddot{X}}\right]\nonumber\\
&{}={}&\text{Tr}\left[\bm{K}_{\ddot{X}}\bm{H}\left(n \bm{I}-\bm{K}_Z\left(\bm{H}\bm{K}_Z+\lambda \bm{I}\right)^{-1}\right)\bm{H}\bm{K}_Y\bm{H}\left(n \bm{I}-\left(\bm{K}_Z\bm{H}+\lambda \bm{I}\right)^{-1}\bm{K}_Z\right)\bm{H}\right]\nonumber\\
&{}={}&\text{Tr}\left[\bm{K}_{\ddot{X}}\bm{H}\left(n \bm{H}-\left(\bm{K}_Z\bm{H}+\lambda \bm{I}\right)^{-1}\bm{K}_Z\bm{H}\right)\bm{K}_Y\bm{H}\left( n \bm{H}-\left(\bm{K}_Z\bm{H}+\lambda \bm{I}\right)^{-1}\bm{K}_Z\bm{H}\right)\right]\nonumber
\end{eqnarray}
and the result follows.
\end{proof}
Using arguments based on concentration of measure in separable Hilbert spaces, the consistency of the estimator in Proposition~\ref{pro:estim} along with the consistency of the conditional independence test when \eqref{Eq:kenji} is used as a test statistic has been established \citep{fukumizu2008kernel}. It is therefore clear from the above discussion that $\Vert\hat{\Sigma}_{Y\ddot{X}|Z}\Vert^2_{HS}$ has similar statistical and computational behavior to that $\mathbb{E}_Z[\Vert \dot{\Sigma}_{YX|Z}\Vert^2_{HS}]$, while not requiring density estimation.

\section{Discussion}
Conditional distance covariance is a commonly used metric for measuring conditional independence in the statistics community. In the machine learning community, a conditional independence measure based on reproducing kernels is popularly used in applications such as conditional independence testing. In this work, we have explored the connection between these two conditional independence measures where we showed the distance based measure to be a limiting version of the kernel based measure, where we may view conditional distance covariance as a member of a much larger class of kernel-based conditional independence measures. This may enable to design more powerful conditional independence tests by choosing a richer class of kernels. We also explored the empirical estimators of these two conditional independence measures where we showed that the kernel-based measure does not require density estimation unlike the distance-based measure, while both the empirical estimators have similar computational complexity.

\section*{Acknowledgements}
BKS is partially supported by NSF-DMS-1713011.
\bibliographystyle{apalike}
\bibliography{references}

\end{document}